\documentclass[pdflatex,sn-mathphys-num]{sn-jnl}


\usepackage{graphicx}%
\usepackage{multirow}%
\usepackage{amsmath,amssymb,amsfonts}%
\usepackage{amsthm}%
\usepackage{mathrsfs}%
\usepackage[title]{appendix}%
\usepackage{xcolor}%
\usepackage{textcomp}%
\usepackage{manyfoot}%
\usepackage{booktabs}%
\usepackage{algorithm}%
\usepackage{algorithmicx}%
\usepackage{algpseudocode}%
\usepackage{listings}%
\usepackage{mathtools}%
\usepackage{graphicx}%


\theoremstyle{thmstyleone}%
\newtheorem{theorem}{Theorem}
\newtheorem{lemma}{Lemma}

\newtheorem{proposition}[theorem]{Proposition}%

\theoremstyle{thmstyletwo}%
\newtheorem{example}{Example}%
\newtheorem{remark}{Remark}%

\theoremstyle{thmstylethree}%
\newtheorem{definition}{Definition}%

\raggedbottom
\newcommand{\R}{\mathbb{R}}
\newcommand{\C}{\mathbb{C}}

\begin{document}

\title[Time Reparametrization and Chaotic Dynamics in Conformable 
$C_0$-Semigroups]{Time Reparametrization and Chaotic Dynamics in Conformable $C_0$-Semigroups}


\author[1]{\fnm{Mohamed} \sur{Khoulane}}\email{khoulane1997@gmail.com}
\author*[1]{\fnm{Aziz} \sur{El Ghazouani}}\email{aziz.elghazouani@usms.ac.ma}
\author[1]{\fnm{M'hamed} \sur{El Omari}}\email{m.elomari@usms.ma}
\affil*[1]{Laboratory of Applied Mathematics and Scientific Computing, Sultan Moulay Slimane University, PO Box 532, Beni Mellal, 23000, Morocco.}


\abstract{Conformable derivatives provide a fractional-looking calculus that remains local and admits a simple representation through classical derivatives with explicit weights. In this paper we develop a systematic operator-theoretic perspective showing that conformable time evolution is, in essence, a classical $C_0$-semigroup observed through a nonlinear clock. We introduce the conformable time map $\psi(t)=t^\alpha/\alpha$ and prove that every $C_0$--$\alpha$-semigroup $\{T_\alpha(t)\}_{t\ge0}$ can be written as $T_\alpha(t)=T(\psi(t))$ for a uniquely determined classical $C_0$-semigroup $\{T(s)\}_{s\ge0}$, with generators agreeing on a common domain. This correspondence yields a one-to-one transfer of mild solutions and shows that orbit-based linear dynamics are invariant under conformable reparametrization. In particular, $\alpha$-hypercyclicity and $\alpha$--chaos coincide with the usual notions for the associated classical semigroup. As a consequence, we obtain a conformable version of the Desch--Schappacher--Webb spectral criterion for chaos. We also place the analysis in the natural functional setting provided by conformable Lebesgue spaces $L^{p,\alpha}$ and their explicit isometric identification with standard $L^p$ spaces, which allows one to transport estimates and spectral arguments without loss. The results clarify which dynamical phenomena in conformable models are genuinely new and which are inherited from classical semigroup dynamics via a nonlinear change of time.}

\keywords{conformable calculus; conformable $C_0$-semigroups; nonlinear time change; Desch--Schappacher--Webb criterion; hypercyclicity; Devaney chaos}


\pacs[MSC Classification]{47D06, 47A10, 37D45, 26A33, 35K57}

\maketitle

\section{Introduction}\label{sec1}

Conformable calculus was introduced as an alternative approach to fractional differentiation aimed at preserving the local nature of classical calculus while allowing the presence of a fractional order parameter. In contrast with integral-type fractional derivatives, conformable derivatives act pointwise and admit a simple representation in terms of ordinary derivatives multiplied by explicit weights; see, for instance, \cite{khalil2014,abdeljawad2015,abdeljawad2016}. Likewise, the conformable integral can be written as a standard Lebesgue integral with respect to a weighted measure. This combination of locality and analytical simplicity makes conformable operators attractive in situations where one seeks to model time rescaling effects without introducing memory or nonlocal phenomena, a distinction emphasized in \cite{tarasov2018}.

From the perspective of evolution equations, the absence of memory effects strongly suggests
that conformable dynamics should be closely connected to the classical theory of strongly
continuous semigroups.
Nevertheless, the precise nature of this connection, especially at the level of generators,
mild solutions, and long-time dynamics, has not been fully clarified.
The main objective of the present work is to provide a systematic operator-theoretic framework
that makes this relationship explicit and to investigate how linear dynamical properties,
notably hypercyclicity and chaos, behave under conformable time evolution.

\medskip
\noindent\textbf{Time reparametrization as a guiding principle.}
Let $\alpha\in(0,1]$ and consider the nonlinear time transformation
\[
\psi(t)=\frac{t^\alpha}{\alpha},\qquad t\ge0.
\]
A central idea developed in this paper is that conformable time evolution can be understood as
classical evolution observed along the nonlinear clock $s=\psi(t)$.
More precisely, given a $C_0$--$\alpha$--semigroup $\{T_\alpha(t)\}_{t\ge0}$ acting on a Banach
space $X$, we associate the family
\[
T(s):=T_\alpha\bigl((\alpha s)^{1/\alpha}\bigr),\qquad s\ge0,
\]
and show that $\{T(s)\}_{s\ge0}$ forms a classical $C_0$--semigroup satisfying
\[
T_\alpha(t)=T\!\left(\frac{t^\alpha}{\alpha}\right).
\]
Within this framework, the $\alpha$--generator of $\{T_\alpha(t)\}$ and the generator of
$\{T(s)\}$ coincide on a common domain, and the corresponding mild solutions are linked by the
same change of variables.
This establishes a canonical and reversible correspondence between conformable and classical
evolution problems.

\medskip
\noindent\textbf{Functional framework and conformable Lebesgue spaces.}
Conformable integration of order $\alpha\in(0,1)$ naturally involves the weight
$t^{\alpha-1}$ and leads to the weighted measure $d\mu_\alpha(t)=t^{\alpha-1}dt$.
This motivates the introduction of conformable Lebesgue spaces
\[
L^{p,\alpha}(I)=\Bigl\{f:\int_I |f(t)|^p\,t^{\alpha-1}dt<\infty\Bigr\},
\]
which coincide with classical weighted $L^p$ spaces.
A key structural result proved in this work is that these spaces are explicitly isometric to
standard $L^p$ spaces via the nonlinear change of variables $s=t^\alpha$, in agreement with
related observations in conformable and time-scale settings \cite{WangZhouLi2016}.
This identification allows one to transfer norm estimates, duality arguments, and spectral
properties from the classical framework to the conformable one in a transparent manner, fully
compatible with the time-reparametrization viewpoint.

\medskip
\noindent\textbf{Chaotic dynamics and spectral criteria.}
The study of linear chaos for semigroups, understood in the sense of Devaney as the coexistence
of hypercyclicity and a dense set of periodic points, is well established in operator theory.
A cornerstone of this theory is the Desch--Schappacher--Webb spectral criterion, which produces
chaotic dynamics from analytic families of eigenvectors associated with the point spectrum
\cite{desch1997}.
Since conformable and classical semigroups are intertwined by the relation
$T_\alpha(t)=T(\psi(t))$, it is natural to expect orbit-based dynamical properties to be
invariant under conformable time changes.
We rigorously confirm this intuition by proving the invariance of the auxiliary dynamical sets
used in the Desch--Schappacher--Webb framework and by deriving a conformable version of the
criterion: under the usual spectral assumptions on the $\alpha$--generator, a
$C_0$--$\alpha$--semigroup exhibits $\alpha$--chaos.
Thus, chaotic behavior in conformable time is governed by the same spectral mechanisms as in
the classical setting.

\medskip
\noindent\textbf{Interpretation and scope.}
The results of this paper help clarify the conceptual status of conformable calculus within the
broader landscape of fractional models.
In particular, they show that, unlike genuinely nonlocal fractional derivatives
\cite{kilbas2006,mainardi2010}, conformable derivatives do not generate new dynamical features
through memory effects.
Instead, the observed complexity arises from the interaction between classical operators and
nonlinear time or space scales.
This viewpoint explains why many qualitative properties of conformable evolution mirror those
of classical semigroups and provides an efficient strategy for constructing and analyzing
examples.

\medskip
\noindent\textbf{Organization of the paper.}
Section~\ref{sec2} reviews the basic notions of conformable integration and differentiation and
introduces $C_0$--$\alpha$--semigroups and their generators.
In Section~\ref{sec3} we develop the theory of conformable Lebesgue spaces and establish their
explicit isometric equivalence with classical $L^p$ spaces.
Section~\ref{sec4} is devoted to the time-reparametrization principle and its consequences for
differentiation, integration, generators, and mild solutions.
In Section~\ref{sec5} we analyze linear dynamics under conformable time evolution, compare
hypercyclicity and chaos with the classical theory, and formulate the conformable
Desch--Schappacher--Webb criterion.
Finally, Section~\ref{sec6} presents illustrative operator models that demonstrate how the
abstract theory applies to concrete conformable evolution equations.

\section{Preliminary framework}\label{sec2}

We collect in this section the fundamental concepts and conventions that will be used
throughout the paper. Our presentation is deliberately concise and focuses on those aspects
of conformable calculus that are essential for the operator--theoretic constructions developed
later. A key point is that conformable operators remain \emph{local} in nature and, for smooth
functions, can be rewritten in terms of classical derivatives or integrals with suitable
weight functions.

\subsection{Conformable integration and differentiation}

We begin with the notion of conformable integration, which naturally induces the underlying
measure on $\R_+$.

\begin{definition}[Conformable integral]
Let $\alpha\in(0,1]$ and let $g:\R_+\to\C$ be a measurable function.
For $0\leq a<t$, the conformable integral of order $\alpha$ is defined by
\[
(\mathcal{I}_\alpha g)(t)
:=
\int_a^t g(\xi)\,\xi^{\alpha-1}\,d\xi,
\]
whenever the above integral exists.
\end{definition}

This definition suggests introducing the weighted measure
\[
d\mu_\alpha(\xi):=\xi^{\alpha-1}\,d\xi,
\]
with respect to which all integrals on $\R_+$ will be understood unless otherwise stated.

The corresponding notion of differentiation is given below.

\begin{definition}[Conformable derivative]
Let $\alpha\in(0,1]$ and let $v:\R_+\to\C$. The conformable derivative of order $\alpha$ at
$t>0$ is defined by
\[
\mathcal{D}_t^\alpha v(t)
:=
\lim_{h\to 0}
\frac{v\bigl(t+h\,t^{1-\alpha}\bigr)-v(t)}{h},
\]
provided that the limit exists. When $\alpha=1$, this definition reduces to the classical
first--order derivative.
\end{definition}

The locality of conformable differentiation is made explicit by the following representation
property.

\begin{proposition}
Assume that $v\in C^1((0,\infty))$ and $\alpha\in(0,1]$. Then, for every $t>0$,
\[
\mathcal{D}_t^\alpha v(t)=t^{1-\alpha}\,\frac{dv}{dt}(t).
\]
\end{proposition}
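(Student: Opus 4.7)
The plan is to reduce the conformable difference quotient to a classical one by a simple reparametrization of the increment, and then invoke the hypothesis $v\in C^1$ to pass to the limit.

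First I would fix $t>0$ and introduce the new increment $k:=h\,t^{1-\alpha}$. Since $t>0$, the map $h\mapsto k$ is a bijection of a neighborhood of $0$ onto a neighborhood of $0$, continuous in both directions, so $h\to 0$ if and only if $k\to 0$. Solving for $h$ gives $h=k\,t^{\alpha-1}$, and substituting into the defining difference quotient yields
\[
\frac{v\bigl(t+h\,t^{1-\alpha}\bigr)-v(t)}{h}
=\frac{v(t+k)-v(t)}{k\,t^{\alpha-1}}
=t^{1-\alpha}\,\frac{v(t+k)-v(t)}{k}.
\]

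Next I would pass to the limit. The factor $t^{1-\alpha}$ does not depend on $h$ (equivalently, on $k$), so it can be pulled out of the limit. The remaining quotient $\bigl(v(t+k)-v(t)\bigr)/k$ tends to $v'(t)=\frac{dv}{dt}(t)$ as $k\to 0$, by the assumption $v\in C^1\bigl((0,\infty)\bigr)$ (only differentiability of $v$ at the fixed point $t$ is actually used, but $C^1$ is assumed in the hypothesis). This gives exactly $\mathcal{D}_t^\alpha v(t)=t^{1-\alpha}\,\frac{dv}{dt}(t)$.

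Finally I would mention the boundary case $\alpha=1$: then $t^{1-\alpha}=1$ and the substitution $k=h$ is the identity, so the formula trivially reduces to the classical first-order derivative, matching the last sentence of the definition. There is essentially no genuine obstacle in this argument; the only point requiring mild care is that the reparametrization $h\mapsto h\,t^{1-\alpha}$ is invertible precisely because $t>0$, which is why the statement is formulated for $t>0$ rather than $t\ge 0$.
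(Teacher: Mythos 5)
Your argument is correct: the linear substitution $k=h\,t^{1-\alpha}$ (invertible since $t>0$) reduces the conformable quotient to $t^{1-\alpha}$ times the classical difference quotient, and differentiability of $v$ at $t$ finishes the proof. The paper states this proposition without proof (it is the standard locality property from the conformable calculus literature), and your substitution argument is exactly the standard one, so there is nothing to object to.
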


Hence, conformable differentiation acts as a weighted classical derivative. In particular,
no memory effects arise, in contrast with nonlocal fractional derivatives of integral type.
This structural feature will be crucial in the analysis of the associated evolution operators.

\subsection{Conformable semigroups}

Conformable calculus also gives rise to a generalized notion of one--parameter semigroups,
obtained through a nonlinear rescaling of time.

\begin{definition}[Conformable $\alpha$--semigroup]
Let $\alpha\in(0,a]$ for some $a>0$, and let $X$ be a Banach space.
A family of bounded linear operators $\{S(t)\}_{t\geq 0}\subset\mathcal{L}(X)$
is called an \emph{$\alpha$--semigroup} if:
\begin{enumerate}
\item $S(0)=\mathrm{Id}$, the identity operator on $X$;
\item for all $r,q\geq 0$,
\[
S\bigl((r+q)^{1/\alpha}\bigr)
=
S\bigl(r^{1/\alpha}\bigr)\,S\bigl(q^{1/\alpha}\bigr).
\]
\end{enumerate}
\end{definition}

When $\alpha=1$, the above conditions coincide with those of a classical strongly continuous
semigroup.

\begin{example}
Let $B\in\mathcal{L}(X)$ be a bounded operator and define
\[
S(t)=\exp\!\bigl(2\,t^{1/2}B\bigr),\qquad t\geq 0.
\]
Then $\{S(t)\}_{t\geq 0}$ forms a $\tfrac12$--semigroup. Indeed,
\begin{enumerate}
\item $S(0)=\exp(0)=\mathrm{Id}$;
\item for all $r,q\geq 0$,
\[
S\bigl((r+q)^2\bigr)
=
\exp\!\bigl(2(r+q)B\bigr)
=
\exp(2rB)\exp(2qB)
=
S(r^2)S(q^2).
\]
\end{enumerate}
\end{example}

Further properties of the conformable fractional exponential and its series representation
can be found in the literature.

\begin{example}
Let $X=C([0,\infty))$ endowed with the supremum norm. For $f\in X$, define
\[
(S(t)f)(x)=f\bigl(x+2\,t^{1/2}\bigr),\qquad x,t\geq 0.
\]
Then $\{S(t)\}_{t\geq 0}$ is a $\tfrac12$--semigroup of bounded linear operators on $X$.
\end{example}

\begin{definition}[$C_0$--$\alpha$--semigroup]
An $\alpha$--semigroup $\{S(t)\}_{t\geq 0}$ is said to be a \emph{$C_0$--$\alpha$--semigroup}
if
\[
\lim_{t\to 0^+} S(t)x = x
\quad\text{for every }x\in X.
\]
\end{definition}

The conformable derivative of $S(t)$ at $t=0$ defines its infinitesimal generator.
More precisely, the \emph{$\alpha$--generator} $\mathcal{A}$ is given by
\[
D(\mathcal{A})
=
\left\{
x\in X \;:\;
\lim_{t\to 0^+}\mathcal{D}_t^\alpha S(t)x \ \text{exists}
\right\},
\qquad
\mathcal{A}x=\lim_{t\to 0^+}\mathcal{D}_t^\alpha S(t)x.
\]

\begin{remark}
In the limiting case $\alpha=1$, one recovers the classical theory of $C_0$--semigroups and
their generators.
\end{remark}

\section{Conformable Lebesgue spaces}\label{sec3}

One of the structural consequences of conformable time reparametrization is the systematic
presence of the weight $t^{\alpha-1}$, with $\alpha\in(0,1)$, in integral expressions.
Indeed, the conformable integral introduced in Section~\ref{sec2} can be written as
\[
(\mathcal{I}_\alpha f)(t)=\int_0^t f(s)\,s^{\alpha-1}\,ds,
\]
which naturally leads to considering function spaces adapted to the weighted measure
\[
d\mu_\alpha(t):=t^{\alpha-1}\,dt \qquad \text{on } \R_+.
\]
Such spaces provide the appropriate framework for norm estimates, duality arguments,
and operator--theoretic considerations arising in conformable evolution problems.

\subsection{Conformable Lebesgue spaces}

\begin{definition}[Conformable Lebesgue spaces]
Let $I\subset\R_+$ be an interval and let $\alpha\in(0,1)$.
For $1\le p<\infty$, we define the conformable Lebesgue space
\[
L^{p,\alpha}(I)
:=
\Bigl\{
f:I\to\C \ \text{measurable} \; ; \;
\int_I |f(t)|^p\,t^{\alpha-1}\,dt < \infty
\Bigr\},
\]
endowed with the norm
\[
\|f\|_{p,\alpha}
=
\Bigl(\int_I |f(t)|^p\,t^{\alpha-1}\,dt\Bigr)^{1/p}.
\]
\end{definition}

These spaces coincide with classical weighted Lebesgue spaces and inherit their main
functional properties.

\begin{remark}[Weighted formulation]
The space $L^{p,\alpha}(I)$ is precisely the weighted space
$L^p(I, t^{\alpha-1}dt)$.
Since the weight $t^{\alpha-1}$ is locally integrable on $(0,\infty)$ for every $\alpha>0$,
the associated measure $\mu_\alpha$ is $\sigma$--finite on any interval $I\subset\R_+$.
As a consequence, standard results of Lebesgue integration theory apply without modification,
including Holder and Minkowski inequalities, density of smooth functions, and completeness.
\end{remark}

A key feature of conformable Lebesgue spaces is that they are isometrically equivalent to
classical $L^p$ spaces via an explicit change of variables.

\begin{proposition}\label{prop:isometry}
Let $I=(0,T)$ with $0<T\le\infty$ and let $\alpha\in(0,1)$.
Define the transformation $\Phi:(0,T)\to(0,T^\alpha)$ by
\[
\Phi(t)=t^\alpha.
\]
Then the mapping
\[
U:L^{p,\alpha}(0,T)\longrightarrow L^p(0,T^\alpha),
\qquad
(Uf)(s)=f\bigl(s^{1/\alpha}\bigr),
\]
is an isometric isomorphism.
\end{proposition}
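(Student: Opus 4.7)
The proof is a direct change-of-variables argument, modeled on the substitution $s = t^\alpha$ singled out in the statement. First I would verify that $\Phi(t) = t^\alpha$ is a $C^1$-diffeomorphism of $(0,T)$ onto $(0,T^\alpha)$ with $\Phi'(t) = \alpha t^{\alpha-1} > 0$, so that both $\Phi$ and $\Phi^{-1}(s) = s^{1/\alpha}$ are Borel measurable. In particular, $U$ is well defined and linear on measurable functions, and $Uf$ is measurable on $(0,T^\alpha)$ whenever $f$ is measurable on $(0,T)$.

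The core computation is the identity
\[
\int_0^{T^\alpha} \bigl|f\bigl(s^{1/\alpha}\bigr)\bigr|^p\, ds = \int_0^T |f(t)|^p \, \alpha\, t^{\alpha-1}\, dt,
\]
obtained by applying the classical substitution rule to the nonnegative integrand and extending from simple functions to all nonnegative measurable functions by monotone convergence. This identity reads $\|Uf\|_{L^p(0,T^\alpha)}^p = \alpha\, \|f\|_{p,\alpha}^p$; absorbing the harmless scaling $\alpha^{1/p}$ — or, equivalently, replacing $\Phi$ by the time map $\psi(t) = t^\alpha/\alpha$ from the introduction, which sends $(0,T)$ onto $(0,T^\alpha/\alpha)$ and satisfies $\psi'(t) = t^{\alpha-1}$ — yields a true isometric identity. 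This already shows that $f \in L^{p,\alpha}(0,T)$ if and only if $Uf \in L^p(0,T^\alpha)$, and that $U$ is an injective bounded linear map preserving norms up to this scaling constant.

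To establish bijectivity, I would exhibit the natural two-sided inverse $V : L^p(0,T^\alpha) \to L^{p,\alpha}(0,T)$ given by $(Vg)(t) = g(t^\alpha)$. A symmetric application of the substitution rule shows that $V$ is bounded with the matching constant, while the pointwise identities
\[
(UVg)(s) = g\bigl((s^{1/\alpha})^\alpha\bigr) = g(s), \qquad (VUf)(t) = f\bigl((t^\alpha)^{1/\alpha}\bigr) = f(t),
\]
valid for every $s \in (0,T^\alpha)$ and $t \in (0,T)$, immediately give $UV = \mathrm{Id}$ and $VU = \mathrm{Id}$ as operators between the respective spaces.

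The argument presents no substantial obstacle; the only step requiring a modicum of care is the justification of the substitution for arbitrary $f \in L^{p,\alpha}$ rather than merely for continuous or compactly supported ones. The cleanest way to discharge this is to record that the weighted measure $\mu_\alpha = t^{\alpha-1}\,dt$ on $(0,T)$ is the pullback of Lebesgue measure on $(0,T^\alpha)$ under $\Phi$, up to the multiplicative constant $\alpha^{-1}$, so that the change-of-variables formula extends from simple nonnegative functions to all nonnegative measurable functions by monotone convergence, and thence to all of $L^{p,\alpha}$ by the usual decomposition into real and imaginary positive and negative parts.
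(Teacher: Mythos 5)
Your proposal follows essentially the same route as the paper's proof: the change of variables $s=t^\alpha$ yielding $\|Uf\|_{L^p(0,T^\alpha)}^p=\alpha\,\|f\|_{p,\alpha}^p$, with the factor $\alpha$ treated as a harmless normalization (exactly as the paper does, and as the later operator $U_p$ with the prefactor $\alpha^{-1/p}$ makes precise), and your explicit inverse $V$ and the monotone-convergence justification of the substitution merely flesh out steps the paper dismisses as ``immediate.'' The proof is correct, and correctly flags the same caveat the paper itself acknowledges, namely that the map as literally defined is an isometry only up to the constant $\alpha^{1/p}$.
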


\begin{proof}
Using the change of variables $s=t^\alpha$, we obtain
\[
\int_0^T |f(t)|^p\,t^{\alpha-1}dt
=
\frac{1}{\alpha}
\int_0^{T^\alpha}
\bigl|f(s^{1/\alpha})\bigr|^p\,ds.
\]
Up to the harmless normalization factor $\alpha^{-1}$, which can be absorbed into the
definition of the norm, this shows that $\|Uf\|_{L^p(0,T^\alpha)}=\|f\|_{p,\alpha}$.
Surjectivity and linearity are immediate.
\end{proof}

\begin{remark}
Proposition~\ref{prop:isometry} shows that conformable Lebesgue spaces do not introduce
new integrability phenomena but rather encode a nonlinear time reparametrization at the
level of function spaces.
This observation is consistent with the local nature of conformable calculus and has been
emphasized in related settings, including fractional Sobolev spaces on time scales
\cite{WangZhouLi2016}.
\end{remark}

\begin{proposition}[Banach property]
For every $1\le p<\infty$, the space $\bigl(L^{p,\alpha}(I),\|\cdot\|_{p,\alpha}\bigr)$
is a Banach space.
\end{proposition}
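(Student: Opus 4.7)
The plan is to deduce completeness of $L^{p,\alpha}(I)$ from the completeness of a classical (unweighted) Lebesgue space via the nonlinear change of variables $s=t^\alpha$ already exploited in Proposition~\ref{prop:isometry}. The norm axioms (nonnegativity, homogeneity, and the triangle inequality through Minkowski's inequality with respect to the $\sigma$-finite weighted measure $d\mu_\alpha$) are inherited directly from the general theory of weighted $L^p$ spaces as pointed out in the preceding remark, so the substantive point is to establish completeness.

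First I would observe that, even though Proposition~\ref{prop:isometry} is stated for $I=(0,T)$, its proof is purely a change-of-variables computation and extends verbatim to any interval $I\subset\R_+$, producing an isometric isomorphism
\[
U:L^{p,\alpha}(I)\longrightarrow L^{p}\bigl(\Phi(I)\bigr),\qquad (Uf)(s)=f\bigl(s^{1/\alpha}\bigr),
\]
where $\Phi(t)=t^\alpha$ and the harmless factor $\alpha^{-1/p}$ is absorbed as before. Then, given a Cauchy sequence $(f_n)$ in $L^{p,\alpha}(I)$, the image sequence $(Uf_n)$ is Cauchy in $L^{p}(\Phi(I))$ because $U$ is an isometry. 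By the classical Riesz--Fischer theorem there exists $g\in L^{p}(\Phi(I))$ with $Uf_n\to g$, and setting $f:=U^{-1}g$ gives $f\in L^{p,\alpha}(I)$ and, again by isometry, $f_n\to f$ in the $\|\cdot\|_{p,\alpha}$-norm.

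An alternative, equally short route is to invoke the standard Riesz--Fischer theorem directly for the $\sigma$-finite measure space $(I,\mathcal{B}(I),\mu_\alpha)$; this avoids any reference to $\Phi$, but yields exactly the same conclusion. I would favor the first route because it keeps the paper's time-reparametrization viewpoint in the foreground and reuses a tool that has just been established. I do not anticipate any genuine obstacle: the only tiny point requiring a word is that the isometry in Proposition~\ref{prop:isometry} must be read for arbitrary subintervals of $\R_+$, which is immediate from the same computation.
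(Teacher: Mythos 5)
Your proposal is correct, but your preferred route is not the one the paper takes: the paper's proof is exactly your ``alternative'' argument, namely the observation that $L^{p,\alpha}(I)$ \emph{is} the weighted space $L^p(I,\mu_\alpha)$ with $\mu_\alpha$ $\sigma$-finite, so completeness is just the classical Riesz--Fischer theorem for $L^p$ of a measure space, with no change of variables at all. Your primary route, transporting Cauchy sequences through the isometry $U$ of Proposition~\ref{prop:isometry}, is also sound --- an isometric isomorphism onto a complete space pulls back completeness, and you are right that the change-of-variables computation extends verbatim from $(0,T)$ to any interval $I\subset\R_+$ (modulo the normalization factor $\alpha^{-1/p}$, which you handle correctly). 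What each approach buys: the paper's argument is shorter and needs no extension of the isometry beyond its stated form, since it rests only on the abstract fact that $t^{\alpha-1}\,dt$ is a $\sigma$-finite measure; your argument is slightly longer but keeps the time-reparametrization philosophy of the paper in view and reuses the isometry as a transfer tool, which is the same mechanism the paper later exploits for semigroups and dynamics. Either proof would be acceptable here.
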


\begin{proof}
Since $L^{p,\alpha}(I)$ coincides with the weighted space $L^p(I,\mu_\alpha)$ and the measure
$\mu_\alpha$ is $\sigma$--finite, completeness follows from the classical completeness of
$L^p$ spaces.
\end{proof}

The family $\{L^{p,\alpha}\}_{1\le p<\infty}$ thus provides a natural and robust functional
setting for conformable integration and time--rescaled evolution equations.
In particular, the isometric equivalence with classical Lebesgue spaces allows one to
transfer many analytical tools directly to the conformable framework.

\section{Time Reparametrization and Unitary Isomorphisms}\label{sec4}

This section collects the basic identifications that allow one to pass from conformable
objects to classical ones by a change of clock. Throughout, we fix $\alpha\in(0,1]$ and we
introduce the \emph{conformable time map}
\[
\psi:[0,\infty)\to[0,\infty),\qquad \psi(t)=\frac{t^\alpha}{\alpha}.
\]
The central point is that $\psi$ is a bijection and that conformable differentiation and
integration become classical operations when expressed in the variable $s=\psi(t)$.

\subsection{A canonical reparametrization}\label{subsec:canonical-reparam}

\begin{theorem}[Canonical factorization through the conformable clock]
\label{thm:canonical-factorization}
Let $\alpha\in(0,1]$ and let $f:[0,\infty)\to\mathbb{R}$ be arbitrary. Define
\[
\psi(t)=\frac{t^\alpha}{\alpha}\qquad (t\ge 0).
\]
Then there exists a \emph{unique} function $g:[0,\infty)\to\mathbb{R}$ such that
\[
f(t)=g\!\left(\frac{t^\alpha}{\alpha}\right)\qquad\text{for all }t\ge 0.
\]
Moreover, $g$ is explicitly given by
\[
g(s)=f\!\bigl((\alpha s)^{1/\alpha}\bigr)\qquad\text{for all }s\ge 0.
\]
\end{theorem}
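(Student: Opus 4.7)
The plan is to reduce the entire statement to the elementary observation that the conformable time map $\psi(t)=t^\alpha/\alpha$ is a bijection of $[0,\infty)$ onto itself; once that is in hand, both existence and uniqueness of $g$ are just the universal property of composition with an inverse. The substance of the theorem is therefore geometric, not analytic: no regularity of $f$ is needed.

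First, I would verify that $\psi$ is a bijection of $[0,\infty)$ onto itself. For $\alpha\in(0,1]$, the map $t\mapsto t^{\alpha}$ is continuous and strictly increasing from $0$ to $\infty$, and multiplication by the positive constant $1/\alpha$ preserves both properties; hence $\psi$ is a continuous strictly increasing bijection. A direct inversion of $s=t^\alpha/\alpha$ yields the explicit inverse
\[
\psi^{-1}(s)=(\alpha s)^{1/\alpha},\qquad s\ge 0.
\]
For existence, I would then set $g:=f\circ\psi^{-1}$, i.e., $g(s)=f\bigl((\alpha s)^{1/\alpha}\bigr)$, and note that for every $t\ge 0$,
\[
g(\psi(t))=f\bigl(\psi^{-1}(\psi(t))\bigr)=f(t),
\]
which is the desired factorization and simultaneously delivers the explicit formula stated in the theorem.

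For uniqueness, I would suppose $g_1,g_2:[0,\infty)\to\mathbb{R}$ both satisfy $f(t)=g_i(\psi(t))$ for all $t\ge 0$. Given any $s\ge 0$, surjectivity of $\psi$ supplies $t=\psi^{-1}(s)$, and then $g_1(s)=g_1(\psi(t))=f(t)=g_2(\psi(t))=g_2(s)$, so $g_1\equiv g_2$. The only step with any content is the bijectivity of $\psi$; there is no genuine obstacle, which reflects the fact that the theorem is a clean formalization of the change-of-clock principle that drives the rest of the paper and will be invoked in the subsequent sections to transport conformable semigroups, generators, and mild solutions to their classical counterparts.
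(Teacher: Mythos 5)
Your proof is correct and follows essentially the same route as the paper: establish that $\psi$ is a strictly increasing continuous bijection of $[0,\infty)$ with inverse $\psi^{-1}(s)=(\alpha s)^{1/\alpha}$, define $g=f\circ\psi^{-1}$ for existence, and use surjectivity of $\psi$ for uniqueness. No issues.
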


\begin{proof}
\textbf{Step 1: $\psi$ is a bijection on $[0,\infty)$.}
The function $t\mapsto t^\alpha$ is continuous and strictly increasing on $[0,\infty)$ for
$\alpha\in(0,1]$. Multiplication by $1/\alpha>0$ preserves monotonicity and continuity, hence
$\psi$ is continuous and strictly increasing. Since $\psi(0)=0$ and $\psi(t)\to\infty$ as
$t\to\infty$, the range is $[0,\infty)$ and $\psi$ is bijective.

\smallskip
\textbf{Step 2: Compute the inverse.}
Solving $s=t^\alpha/\alpha$ gives $t=(\alpha s)^{1/\alpha}$, so
\[
\psi^{-1}(s)=(\alpha s)^{1/\alpha}\qquad (s\ge 0).
\]

\smallskip
\textbf{Step 3: Existence and uniqueness of $g$.}
Define $g:=f\circ\psi^{-1}$, i.e.\ $g(s)=f((\alpha s)^{1/\alpha})$. Then
\[
g(\psi(t))=f(\psi^{-1}(\psi(t)))=f(t)\qquad (t\ge 0),
\]
so $f=g\circ\psi$. If $\tilde g$ also satisfies $f=\tilde g\circ\psi$, then for every $s\ge 0$,
\[
\tilde g(s)=\tilde g(\psi(\psi^{-1}(s)))=f(\psi^{-1}(s))=g(s),
\]
hence $\tilde g=g$. \qedhere
\end{proof}

\subsection{Conformable differentiability as classical differentiability}\label{subsec:diff}

\begin{theorem}[Conformable derivative via time change]
\label{thm:diff-timechange}
Let $\alpha\in(0,1)$ and let $f:(0,\infty)\to\mathbb{R}$. Define
\[
\psi(t)=\frac{t^\alpha}{\alpha}\qquad (t>0),\qquad
g(s)=f\!\bigl((\alpha s)^{1/\alpha}\bigr)\qquad (s>0).
\]
Then, for each $t>0$ with $s=\psi(t)$, the following are equivalent:
\begin{enumerate}
\item $f$ is conformably $\alpha$--differentiable at $t$, i.e.
\[
D_t^\alpha f(t)=\lim_{\varepsilon\to 0}
\frac{f\bigl(t+\varepsilon t^{1-\alpha}\bigr)-f(t)}{\varepsilon}
\quad\text{exists};
\]
\item $g$ is classically differentiable at $s$.
\end{enumerate}
Moreover, whenever these conditions hold,
\[
D_t^\alpha f(t)=g'\!\left(\frac{t^\alpha}{\alpha}\right).
\]
\end{theorem}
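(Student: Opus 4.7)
The proof hinges on the fact that on $(0,\infty)$ the clock $\psi(t)=t^\alpha/\alpha$ is $C^\infty$ and bijective, with $\psi'(t)=t^{\alpha-1}>0$. Since Theorem~\ref{thm:canonical-factorization} gives $f=g\circ\psi$, the plan is to show that the conformable difference quotient of $f$ at $t$ and the classical difference quotient of $g$ at $s=\psi(t)$ differ only by a factor that tends to $1$. The natural tool is a single substitution plus a Taylor expansion of $\psi$, so no sophisticated analysis is required.

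Concretely, I will first set up the algebraic identity. Writing $f=g\circ\psi$, I define
\[
\delta(\varepsilon):=\psi\bigl(t+\varepsilon t^{1-\alpha}\bigr)-\psi(t),
\]
so that the conformable difference quotient of $f$ at $t$ factors as
\[
\frac{f(t+\varepsilon t^{1-\alpha})-f(t)}{\varepsilon}
=\frac{g(s+\delta(\varepsilon))-g(s)}{\delta(\varepsilon)}\cdot\frac{\delta(\varepsilon)}{\varepsilon},
\]
valid whenever $\delta(\varepsilon)\neq 0$. Because $\psi$ is $C^1$ near $t>0$ with $\psi'(t)=t^{\alpha-1}$, a first-order expansion yields $\delta(\varepsilon)=t^{\alpha-1}\cdot\varepsilon t^{1-\alpha}+o(\varepsilon)=\varepsilon+o(\varepsilon)$. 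In particular $\delta(\varepsilon)/\varepsilon\to 1$ and $\delta(\varepsilon)\to 0$ as $\varepsilon\to 0$, and $\delta(\varepsilon)\neq 0$ for all small enough $\varepsilon\neq 0$ by strict monotonicity of $\psi$.

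This identity already yields the implication (2)$\Rightarrow$(1): if $g'(s)$ exists, the first factor above tends to $g'(s)$ and the second to $1$, giving $D_t^\alpha f(t)=g'(s)$. For the converse (1)$\Rightarrow$(2), I will invert the substitution. Given a classical increment $\tau$ at $s$, I set
\[
\varepsilon(\tau):=\bigl((\alpha(s+\tau))^{1/\alpha}-t\bigr)\,t^{\alpha-1},
\]
so that $t+\varepsilon(\tau)t^{1-\alpha}=(\alpha(s+\tau))^{1/\alpha}=\psi^{-1}(s+\tau)$. An analogous Taylor expansion, this time of $\psi^{-1}$ at $s$ (which is smooth because $\psi'(t)>0$), gives $\varepsilon(\tau)/\tau\to 1$ and $\varepsilon(\tau)\to 0$. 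Then
\[
\frac{g(s+\tau)-g(s)}{\tau}=\frac{f(t+\varepsilon(\tau)t^{1-\alpha})-f(t)}{\varepsilon(\tau)}\cdot\frac{\varepsilon(\tau)}{\tau}
\]
shows that if the conformable limit exists, so does $g'(s)$, with the same value.

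The only mildly delicate step is bookkeeping of the ``$o(\varepsilon)$'' terms to justify that $\delta(\varepsilon)/\varepsilon\to 1$ (and symmetrically $\varepsilon(\tau)/\tau\to 1$) for $t>0$ fixed; this is where the assumption $t>0$ is used crucially, since $\psi$ fails to be $C^1$ at $0$ for $\alpha\in(0,1)$. Once this is pinned down the proof is a two-line factorization. I therefore expect no real obstacle beyond cleanly writing the expansion of $\psi$ (and its inverse) around a positive point and invoking the elementary fact that a product of limits converges when each factor does.
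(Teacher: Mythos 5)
Your proof is correct and follows essentially the same route as the paper: factor the conformable difference quotient of $f$ through the clock $\psi$, use the expansion $\psi(t+\varepsilon t^{1-\alpha})-\psi(t)=\varepsilon+o(\varepsilon)$ at the fixed point $t>0$, and pass the limits through the product. Your handling of the direction (1)$\Rightarrow$(2) via the explicit inverse increment $\varepsilon(\tau)$ is in fact slightly more careful than the paper's, which tests $g$ only along the increments $\delta(\varepsilon)$ and tacitly relies on these sweeping out a full punctured neighborhood of $0$.
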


\begin{proof}
Fix $t>0$ and set $s=\psi(t)=t^\alpha/\alpha$. Consider $t_\varepsilon=t+\varepsilon t^{1-\alpha}$ and define
\[
\alpha s(\varepsilon)=\psi(t_\varepsilon)-\psi(t)=\frac{(t+\varepsilon t^{1-\alpha})^\alpha-t^\alpha}{\alpha}.
\]
Write $t+\varepsilon t^{1-\alpha}=t(1+\varepsilon t^{-\alpha})$ to obtain
\[
\alpha s(\varepsilon)=\frac{t^\alpha}{\alpha}\Bigl[(1+\varepsilon t^{-\alpha})^\alpha-1\Bigr].
\]
Using $(1+x)^\alpha=1+\alpha x+o(x)$ as $x\to 0$, with $x=\varepsilon t^{-\alpha}$, we infer
\[
\alpha s(\varepsilon)=\varepsilon+o(\varepsilon)\qquad(\varepsilon\to 0),
\]
hence $\alpha s(\varepsilon)\to 0$ and $\alpha s(\varepsilon)/\varepsilon\to 1$.

Since $f=g\circ\psi$, we have
\[
\frac{f(t_\varepsilon)-f(t)}{\varepsilon}
=
\frac{g(s+\alpha s(\varepsilon))-g(s)}{\varepsilon}
=
\frac{g(s+\alpha s(\varepsilon))-g(s)}{\alpha s(\varepsilon)}
\cdot \frac{\alpha s(\varepsilon)}{\varepsilon}.
\]
If $D_t^\alpha f(t)$ exists, then the left-hand side converges, and the second factor tends to $1$,
so the first factor converges, which is exactly the existence of $g'(s)$; moreover the limits coincide.
Conversely, if $g'(s)$ exists, then the first factor converges to $g'(s)$ and the second factor to $1$,
so the conformable difference quotient converges and equals $g'(s)$. This yields
$D_t^\alpha f(t)=g'(t^\alpha/\alpha)$.
\end{proof}

\subsection{Change of variables for the conformable integral}\label{subsec:int}

\begin{theorem}[Change of variables for conformable integration]
\label{thm:int-timechange}
Let $\alpha\in(0,1]$ and let $f:[0,\infty)\to\mathbb{C}$ be a measurable function. Fix $0\le a<x$ and define
\[
\psi(\tau)=\frac{\tau^\alpha}{\alpha},\qquad 
g(s)=f\!\bigl((\alpha s)^{1/\alpha}\bigr).
\]
Then the following change-of-variables formula holds:
\[
\int_a^x f(\tau)\,\tau^{\alpha-1}\,d\tau
=
\int_{\psi(a)}^{\psi(x)} g(s)\,ds.
\]
Equivalently, for every $x\ge a$,
\[
(I_x^\alpha f)(x)
=
\int_{\frac{a^\alpha}{\alpha}}^{\frac{x^\alpha}{\alpha}} g(s)\,ds.
\]
\end{theorem}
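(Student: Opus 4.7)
The plan is to derive the identity as a direct application of the classical change-of-variables formula for Lebesgue integrals, exploiting the fact that $\psi$ is a monotone $C^1$ bijection on $(0,\infty)$ with derivative $\psi'(\tau)=\tau^{\alpha-1}$. In this way the Jacobian of the substitution $s=\psi(\tau)$ absorbs exactly the weight $\tau^{\alpha-1}$ appearing on the left-hand side, so the conformable integral is mapped to an ordinary Lebesgue integral of the pulled-back integrand.

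First I would treat the nonsingular case $0<a<x$. On the compact interval $[a,x]$ the map $\psi$ is a $C^1$ diffeomorphism onto $[\psi(a),\psi(x)]$ with strictly positive derivative bounded above and below, so the standard substitution theorem applies to every measurable $f$ with $f(\cdot)(\cdot)^{\alpha-1}$ integrable there. Writing $\tau=\psi^{-1}(s)=(\alpha s)^{1/\alpha}$ via Theorem~\ref{thm:canonical-factorization} and noting $\tau^{\alpha-1}\,d\tau=ds$, a direct substitution yields
\[
\int_a^x f(\tau)\,\tau^{\alpha-1}\,d\tau
=\int_{\psi(a)}^{\psi(x)} f\bigl(\psi^{-1}(s)\bigr)\,ds
=\int_{\psi(a)}^{\psi(x)} g(s)\,ds,
\]
which is precisely the asserted identity by the definition of $g$.

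Second, I would pass to the boundary case $a=0$, which is the only place where $\alpha<1$ introduces a singular weight. Applying the identity just proved on $[a_n,x]$ for a sequence $a_n\downarrow 0$ and passing to the limit on both sides closes the argument: on the left by monotone convergence for $f\ge 0$, and on the right by absolute continuity of the Lebesgue integral near $\psi(0)=0$. The general complex-valued case follows by decomposing $f$ into positive and negative parts of its real and imaginary components, or equivalently by dominated convergence applied to $|f|$ for integrable $f$.

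The main obstacle, if one must be singled out, is precisely this endpoint behavior at $\tau=0$: the weight $\tau^{\alpha-1}$ blows up and $\psi$ fails to be a global $C^1$ diffeomorphism on $[0,\infty)$. The truncation argument above handles it cleanly, because $\psi$ remains a monotone absolutely continuous bijection of $[0,\infty)$ sending $0$ to $0$, so the identity passes to the limit without further assumptions. The equivalent statement in terms of $(I_x^\alpha f)(x)$ is then just the specialization $\psi(a)=a^\alpha/\alpha$ and $\psi(x)=x^\alpha/\alpha$, and the case $\alpha=1$ is trivial with $\psi(t)=t$ and $g=f$.
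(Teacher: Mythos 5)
Your proof is correct and takes essentially the same route as the paper: the substitution $s=\psi(\tau)$, whose Jacobian $\psi'(\tau)=\tau^{\alpha-1}$ absorbs the conformable weight and turns the integrand into $f(\psi^{-1}(s))=g(s)$. The only difference is your added truncation argument near $\tau=0$ (with monotone/dominated convergence), which supplies rigor at the singular endpoint that the paper's one-line substitution passes over silently.
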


\begin{proof}
The substitution $s=\psi(\tau)=\tau^\alpha/\alpha$ is admissible on $[a,x]$, since $\psi$ is strictly increasing.
Moreover, $\psi'(\tau)=\tau^{\alpha-1}$, so that $ds=\tau^{\alpha-1}\,d\tau$. The integration limits transform as
$s=\psi(a)$ and $s=\psi(x)$. Observing that $g(\psi(\tau))=f(\tau)$ for $\tau\ge0$, we obtain
\[
\int_a^x f(\tau)\,\tau^{\alpha-1}\,d\tau
=
\int_{\psi(a)}^{\psi(x)} f(\psi^{-1}(s))\,ds
=
\int_{\psi(a)}^{\psi(x)} g(s)\,ds,
\]
which completes the proof.
\end{proof}

\begin{proposition}[Isometric isomorphism between $L^{p,\alpha}(\mathbb{R}_+)$ and $L^p(0,\infty)$]
Let $\alpha\in(0,1]$ and $1\le p<\infty$. Define the weighted measure $d^\alpha x=x^{\alpha-1}dx$ and the space
\[
L^{p,\alpha}(\mathbb{R}_+)
=
\Bigl\{
f:\mathbb{R}_+\to\mathbb{C}\ \text{measurable} :
\int_0^\infty |f(x)|^p\,x^{\alpha-1}dx<\infty
\Bigr\}.
\]
Set $\xi=x^\alpha$ and define the operator
\[
(U_pf)(\xi):=\alpha^{-1/p}f(\xi^{1/\alpha}),\qquad \xi>0.
\]
Then $U_p:L^{p,\alpha}(\mathbb{R}_+)\to L^p(0,\infty)$ is an isometric isomorphism. Its inverse is given by
\[
(U_p^{-1}g)(x)=\alpha^{1/p}g(x^\alpha),\qquad x>0.
\]
Moreover, when $p=2$, the operator $U_2$ is unitary.
\end{proposition}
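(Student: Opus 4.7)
The plan is to run the change of variables $\xi = x^\alpha$ through the integral defining $\|f\|_{p,\alpha}$ and track the normalization factor $\alpha^{-1/p}$ carefully, then verify bijectivity by direct composition of the candidate inverse, and finally specialize to $p=2$ to obtain unitarity via a polarization argument or a direct inner product computation.

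First, I would check that $U_p$ is well defined as a linear map between the two spaces. Linearity is immediate from the pointwise definition $(U_p f)(\xi) = \alpha^{-1/p} f(\xi^{1/\alpha})$, so the core step is the isometry identity. Starting from $\|U_p f\|_{L^p(0,\infty)}^p = \int_0^\infty \alpha^{-1} |f(\xi^{1/\alpha})|^p\, d\xi$, I would apply the change of variables $\xi = x^\alpha$, for which $d\xi = \alpha\, x^{\alpha-1} dx$ (this is exactly the Jacobian that appears in Theorem~\ref{thm:int-timechange}). The factor of $\alpha$ cancels the $\alpha^{-1}$ coming from $\alpha^{-1/p}$ raised to the $p$th power, leaving precisely $\int_0^\infty |f(x)|^p\, x^{\alpha-1} dx = \|f\|_{p,\alpha}^p$. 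Thus $U_p$ is an isometry; in particular it is injective and has closed range.

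Next, I would show surjectivity by producing the explicit inverse. Given $g \in L^p(0,\infty)$, define $h(x) := \alpha^{1/p} g(x^\alpha)$ and verify, by the symmetric change of variables $x = \xi^{1/\alpha}$, that $h \in L^{p,\alpha}(\mathbb{R}_+)$ with $\|h\|_{p,\alpha} = \|g\|_{L^p}$. A pointwise check of $U_p h = g$ and $U_p^{-1} U_p f = f$ then confirms that $U_p^{-1}$ is a genuine two-sided inverse, so $U_p$ is an isometric isomorphism. The expression $(U_p^{-1} g)(x) = \alpha^{1/p} g(x^\alpha)$ is then read off from this construction.

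Finally, for the unitarity claim at $p=2$, since the spaces are Hilbert spaces it suffices to show that $U_2$ preserves inner products. This follows from the isometry by polarization, but it is cleaner to compute directly: for $f_1, f_2 \in L^{2,\alpha}(\mathbb{R}_+)$,
\begin{equation*}
\langle U_2 f_1, U_2 f_2 \rangle_{L^2} = \int_0^\infty \alpha^{-1}\, f_1(\xi^{1/\alpha})\, \overline{f_2(\xi^{1/\alpha})}\, d\xi,
\end{equation*}
and the same substitution $\xi = x^\alpha$ converts this into $\int_0^\infty f_1(x)\,\overline{f_2(x)}\, x^{\alpha-1} dx = \langle f_1, f_2 \rangle_{L^{2,\alpha}}$. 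Combined with the already established bijectivity, this yields unitarity.

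I do not expect a genuine obstacle here; the only delicate point is bookkeeping of the constants $\alpha^{\pm 1/p}$ and the Jacobian $\alpha\, x^{\alpha-1}$, which must conspire to cancel exactly. The fact that the normalization is chosen precisely to absorb this Jacobian is the conceptual content, and everything else reduces to a single application of the change of variables established earlier in Theorem~\ref{thm:int-timechange}.
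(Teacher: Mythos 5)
Your proposal is correct and follows essentially the same route as the paper's proof: the same substitution $\xi=x^\alpha$ with the normalization $\alpha^{-1/p}$ absorbing the Jacobian, the same explicit inverse $g\mapsto\alpha^{1/p}g(x^\alpha)$ to establish surjectivity, and the same inner-product computation for unitarity when $p=2$.
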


\begin{proof}
With the change of variables $\xi=x^\alpha$, we have $d\xi=\alpha x^{\alpha-1}dx$, hence
$x^{\alpha-1}dx=\alpha^{-1}d\xi$. Therefore,
\[
\|f\|_{p,\alpha}^p
=
\int_0^\infty |f(x)|^p x^{\alpha-1}dx
=
\frac1\alpha\int_0^\infty |f(\xi^{1/\alpha})|^p d\xi
=
\int_0^\infty |\alpha^{-1/p}f(\xi^{1/\alpha})|^p d\xi
=
\|U_pf\|_{L^p}^p.
\]
Thus, $U_p$ is an isometry. For $g\in L^p(0,\infty)$, define $f(x)=\alpha^{1/p}g(x^\alpha)$. Then $U_pf=g$ and
\[
\|f\|_{p,\alpha}^p
=
\int_0^\infty \alpha |g(x^\alpha)|^p x^{\alpha-1}dx
=
\int_0^\infty |g(\xi)|^p d\xi
=
\|g\|_{L^p}^p,
\]
which shows that $U_p$ is surjective and that $U_p^{-1}$ is also an isometry. Finally, when $p=2$,
the same computation shows that the inner product is preserved, and hence $U_2$ is unitary.
\end{proof}

\subsection{Conformable Mild solutions }
\label{subsec:mild}

Let $X$ be a Banach space and let $\{T_\alpha(t)\}_{t\ge0}\subset\mathcal L(X)$ be a
$C_0$--$\alpha$--semigroup.
A natural way to relate this family to the classical semigroup theory consists in introducing
a new time variable
\[
s=\frac{t^\alpha}{\alpha},
\]
and defining the associated family of operators by
\[
T(s):=T_\alpha\!\bigl((\alpha s)^{1/\alpha}\bigr),\qquad s\ge0.
\]
This construction allows one to interpret conformable evolution as a classical evolution
observed through a nonlinear time scale.

\begin{definition}[Mild solutions]
\leavevmode
\begin{enumerate}
\item A mapping $x:[0,\infty)\to X$ is called a \emph{mild solution} of the conformable Cauchy problem
\[
x^{(\alpha)}(t)=Ax(t),\qquad x(0)=x_0,
\]
if it is given by
\[
x(t)=T_\alpha(t)x_0,\qquad t\ge0,
\]
where $A$ denotes the $\alpha$--generator of the semigroup $\{T_\alpha(t)\}$.

\item A mapping $y:[0,\infty)\to X$ is called a \emph{mild solution} of the classical Cauchy problem
\[
y'(s)=By(s),\qquad y(0)=y_0,
\]
if it is represented by
\[
y(s)=T(s)y_0,\qquad s\ge0,
\]
where $B$ is the infinitesimal generator of the $C_0$--semigroup $\{T(s)\}$.
\end{enumerate}
\end{definition}

\begin{lemma}[Equality of generators under time reparametrization]
\label{lem:generators-coincide}
Let $\{T_\alpha(t)\}_{t\ge0}$ be a $C_0$--$\alpha$--semigroup on $X$ and define
$T(s)=T_\alpha((\alpha s)^{1/\alpha})$ for $s\ge0$.
If $A$ and $B$ denote respectively the $\alpha$--generator of $\{T_\alpha(t)\}$ and the generator
of $\{T(s)\}$, then
\[
D(A)=D(B)
\quad\text{and}\quad
Ax=Bx \qquad \text{for all }x\in D(A).
\]
\end{lemma}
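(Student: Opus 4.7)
The plan is to exploit the orbit-level factorization forced by Theorem~\ref{thm:canonical-factorization} and then invoke Theorem~\ref{thm:diff-timechange} pointwise on $X$. For a fixed $x\in X$ set
\[
f_x(t):=T_\alpha(t)x,\qquad g_x(s):=T(s)x,
\]
so that the defining relation $T(s)=T_\alpha((\alpha s)^{1/\alpha})$ yields $f_x=g_x\circ\psi$ on $[0,\infty)$. Strong continuity of $\{T_\alpha(t)\}$ combined with continuity of $\psi^{-1}$ then makes $g_x$ continuous on $[0,\infty)$ with $g_x(0)=x$, which is what both inclusions will rest on.

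For $D(B)\subseteq D(A)$: if $x\in D(B)$, the classical $C_0$-semigroup calculus gives $g_x\in C^1([0,\infty);X)$ with $g_x'(s)=T(s)Bx$. Applying Theorem~\ref{thm:diff-timechange} orbitwise (its proof is a pure chain-rule computation that survives the Banach-valued setting without change) one obtains
\[
\mathcal{D}_t^\alpha T_\alpha(t)x=g_x'(\psi(t))=T_\alpha(t)Bx
\quad\text{for every }t>0.
\]
Letting $t\to 0^+$ and using strong continuity of $\{T_\alpha(t)\}$ at the origin yields $\mathcal{D}_t^\alpha T_\alpha(t)x\to Bx$, hence $x\in D(A)$ and $Ax=Bx$.

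For $D(A)\subseteq D(B)$: if $x\in D(A)$, then by definition $\mathcal{D}_t^\alpha T_\alpha(t)x$ exists on some right-interval $(0,\delta)$ and tends to $Ax$ as $t\to 0^+$. Theorem~\ref{thm:diff-timechange} converts this into classical strong differentiability of $g_x$ on $(0,\psi(\delta))$ with $g_x'(\psi(t))=\mathcal{D}_t^\alpha T_\alpha(t)x$, so $\lim_{s\to 0^+}g_x'(s)=Ax$. The remaining and only delicate step is to upgrade this limit of interior derivatives into a right-derivative at $s=0$: I would apply the Banach-space mean value inequality to $\phi(s):=g_x(s)-s\,Ax$, which on $[0,h]\subset[0,\psi(\delta))$ is continuous and differentiable on $(0,h)$ with $\phi'(\sigma)=g_x'(\sigma)-Ax$. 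The inequality gives $\|g_x(h)-x-hAx\|\le h\sup_{0<\sigma<h}\|g_x'(\sigma)-Ax\|$, and the right-hand side is $o(h)$ by the limit just established, so $g_x$ is right-differentiable at $0$ with derivative $Ax$, proving $x\in D(B)$ and $Bx=Ax$.

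The main obstacle I expect is precisely this last step, because the $\alpha$-generator is defined through a limit of interior conformable derivatives, not through a difference quotient anchored at $t=0$. In scalar calculus the gap is automatic via the classical mean value theorem, but in the Banach-valued setting one has to replace MVT by the mean value inequality (a Hahn-Banach corollary); this in turn requires continuity of $g_x$ on a right-neighborhood of the origin, which has to be deduced beforehand from strong continuity of $T_\alpha$ through the bijection $\psi$. Once this analytic bridge is secured, every other identity in the lemma reduces to direct bookkeeping from Theorem~\ref{thm:diff-timechange}.
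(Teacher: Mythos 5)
Your proof is correct, but it takes a genuinely different route from the paper's. The paper's argument is essentially a one-line identification of difference quotients: setting $t=(\alpha s)^{1/\alpha}$ it writes $\frac{T(s)x-x}{s}=\frac{T_\alpha(t)x-x}{t^\alpha/\alpha}$ and asserts that these two limits ``by definition'' characterize $B$ and $A$. That is shorter, but it tacitly replaces the paper's own definition of the $\alpha$--generator (the limit as $t\to0^+$ of the conformable derivatives $\mathcal{D}_t^\alpha T_\alpha(t)x$ taken at interior times $t>0$) by the quotient anchored at the origin, an equivalence the paper never establishes. Your argument works with the literal definition: you convert interior conformable derivatives of the orbit into classical derivatives of $g_x=T(\cdot)x$ via a Banach-valued reading of Theorem~\ref{thm:diff-timechange} (whose chain-rule proof indeed survives unchanged), and then bridge to the difference quotient at $s=0$ in both directions --- using the $C^1$ orbit regularity $g_x'(s)=T(s)Bx$ for $x\in D(B)$, and the vector-valued mean value inequality to upgrade $\lim_{s\to0^+}g_x'(s)=Ax$ into right-differentiability of $g_x$ at $0$ for $x\in D(A)$. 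This buys a proof that actually closes the gap between the two characterizations of the $\alpha$--generator, at the cost of more machinery than the paper uses. Two small points to tighten: you rely, without stating it, on the fact that $\{T(s)\}_{s\ge0}$ is a classical $C_0$--semigroup (the semigroup law for $T$, which the paper derives from the $\alpha$--semigroup property, is needed for your appeal to $g_x\in C^1$ with $g_x'=T(\cdot)Bx$); and your blanket claim that $g_x$ is continuous on $[0,\infty)$ does not follow from strong continuity of $T_\alpha$ at the origin alone, though where you actually use it (on $[0,h]$ in the mean value step) continuity at interior points is supplied by the differentiability you have just established, so the argument stands.
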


\begin{proof}
We first observe that $\{T(s)\}_{s\ge0}$ is a $C_0$--semigroup.
Indeed, $T(0)=I$ and, for $s,r\ge0$,
\[
T(s+r)
=
T_\alpha\!\bigl((\alpha(s+r))^{1/\alpha}\bigr)
=
T_\alpha\!\Bigl(\bigl((\alpha s)^{1/\alpha}\bigr)^\alpha
+
\bigl((\alpha r)^{1/\alpha}\bigr)^\alpha\Bigr)^{1/\alpha}
=
T(s)T(r),
\]
since $\{T_\alpha(t)\}$ satisfies the $\alpha$--semigroup property.
Strong continuity at $s=0$ follows from $(\alpha s)^{1/\alpha}\to0$ as $s\to0^+$ and the
strong continuity of $\{T_\alpha(t)\}$ at the origin.

Let $x\in X$ and set $t=(\alpha s)^{1/\alpha}$. Then
\[
\frac{T(s)x-x}{s}
=
\frac{T_\alpha(t)x-x}{t^\alpha/\alpha}.
\]
Hence the limit on the left-hand side exists as $s\to0^+$ if and only if the limit on the
right-hand side exists as $t\to0^+$, and in this case both limits coincide.
By definition, these limits characterize the generators $B$ and $A$, respectively.
This proves the claim.
\end{proof}

\begin{proposition}[Reparametrized representation of a $C_0$--$\alpha$--semigroup]
\label{prop:Td-reparam}
Let $\{T_\alpha(t)\}_{t\ge0}$ and $\{T(s)\}_{s\ge0}$ be related by
$T(s)=T_\alpha((\alpha s)^{1/\alpha})$.
Then the following identity holds:
\[
T_\alpha(t)=T\!\left(\frac{t^\alpha}{\alpha}\right),
\qquad t\ge0.
\]
\end{proposition}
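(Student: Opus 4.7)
The plan is to reduce the identity to the fact that the conformable time map $\psi(t)=t^\alpha/\alpha$ and its inverse $\psi^{-1}(s)=(\alpha s)^{1/\alpha}$, already established in Theorem~\ref{thm:canonical-factorization}, compose to the identity. Since $T(s)$ is defined by applying $T_\alpha$ at $\psi^{-1}(s)$, the claimed formula is really just the observation that evaluating $T$ at $\psi(t)$ undoes this composition.

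Concretely, I would proceed as follows. First, fix $t\ge 0$ and set $s=\psi(t)=t^\alpha/\alpha$. Next, substitute this $s$ into the defining relation $T(s)=T_\alpha((\alpha s)^{1/\alpha})$, obtaining
\[
T\!\left(\frac{t^\alpha}{\alpha}\right)
= T_\alpha\!\left(\bigl(\alpha\cdot\tfrac{t^\alpha}{\alpha}\bigr)^{1/\alpha}\right)
= T_\alpha\!\bigl((t^\alpha)^{1/\alpha}\bigr)
= T_\alpha(t),
\]
where in the last step I use $\alpha\in(0,1]$ and $t\ge 0$, so that $(t^\alpha)^{1/\alpha}=t$ without sign ambiguity. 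Since $t$ was arbitrary, this yields the identity on all of $[0,\infty)$.

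There is no real obstacle here beyond bookkeeping: the only thing worth stating explicitly is that the two operators $t\mapsto t^\alpha$ and $s\mapsto s^{1/\alpha}$ are mutually inverse on $[0,\infty)$ precisely because we restrict to nonnegative reals and $\alpha>0$, which is the same bijectivity argument given in Step~1 of the proof of Theorem~\ref{thm:canonical-factorization}. If desired, one could also observe that the proof does not use any semigroup structure, strong continuity, or even linearity: the statement is a pure change-of-variables identity that happens to be applied to an operator-valued function. Hence it may be presented as an immediate corollary of the canonical factorization theorem.
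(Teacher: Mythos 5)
Your proof is correct and is essentially the paper's own argument: fix $t$, set $s=t^\alpha/\alpha$, plug into the defining relation $T(s)=T_\alpha((\alpha s)^{1/\alpha})$, and simplify $(t^\alpha)^{1/\alpha}=t$ on $[0,\infty)$. The extra remarks about bijectivity and the lack of any need for semigroup structure are accurate but not a different method.
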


\begin{proof}
For a fixed $t\ge0$, set $s=t^\alpha/\alpha$. Then
\[
T(s)
=
T_\alpha\!\bigl((\alpha s)^{1/\alpha}\bigr)
=
T_\alpha(t),
\]
which yields the desired formula.
\end{proof}

\begin{remark}[Time change and correspondence of mild solutions]
Proposition~\ref{prop:Td-reparam} shows that every $C_0$--$\alpha$--semigroup can be viewed as a
classical $C_0$--semigroup evolving along the nonlinear time scale $s=t^\alpha/\alpha$.

This change of variables establishes a one-to-one correspondence between mild solutions.
If $x(t)=T_\alpha(t)x_0$ is the mild solution of the conformable problem, then
\[
y(s):=x\bigl((\alpha s)^{1/\alpha}\bigr)=T(s)x_0
\]
is the mild solution of the classical evolution equation.
Conversely, if $y(s)=T(s)y_0$, then
\[
x(t):=y\!\left(\frac{t^\alpha}{\alpha}\right)=T_\alpha(t)y_0
\]
is the corresponding mild solution in conformable time.
\end{remark}

\section{Linear dynamics under conformable time reparametrization}
\label{sec5}

Let $X$ be a separable Banach space and let $\alpha\in(0,1]$.
We consider a $C_0$--$\alpha$--semigroup $\{S(t)\}_{t\ge0}\subset\mathcal L(X)$ in the sense of
Section~\ref{sec2}.
A central observation in conformable dynamics is that such families can be interpreted as
classical $C_0$--semigroups evolving under a nonlinear change of time.

To make this precise, we introduce the increasing homeomorphism
\[
\varphi:[0,\infty)\to[0,\infty),
\qquad
\varphi(t)=\frac{t^\alpha}{\alpha},
\]
and define the associated classical semigroup $\{U(s)\}_{s\ge0}$ by
\[
U(s):=S\bigl((\alpha s)^{1/\alpha}\bigr),
\qquad s\ge0.
\]
Equivalently,
\begin{equation}\label{eq:S-U}
S(t)=U(\varphi(t)),\qquad t\ge0.
\end{equation}
This identity allows one to transport dynamical properties between the conformable and
classical settings.

\subsection{Hypercyclicity and chaos: conformable versus classical dynamics}
\label{subsec:hc}

\begin{definition}[$\alpha$--hypercyclicity and $\alpha$--chaos]
\leavevmode
\begin{enumerate}
\item The semigroup $\{S(t)\}_{t\ge0}$ is said to be \emph{$\alpha$--hypercyclic} if there exists
$x\in X$ such that the orbit $\{S(t)x:\ t\ge0\}$ is dense in $X$.
\item It is called \emph{$\alpha$--chaotic} if it is $\alpha$--hypercyclic and the set of
\emph{$\alpha$--periodic points}
\[
X_p^\alpha
:=
\{x\in X:\ \exists\,t>0\ \text{such that } S(t)x=x\}
\]
is dense in $X$.
\end{enumerate}
\end{definition}

Following the classical framework introduced by Desch, Schappacher and Webb, we also define
the sets
\[
X_0^\alpha
:=
\{x\in X:\ \lim_{t\to\infty} S(t)x=0\},
\]
and
\[
X_\infty^\alpha
:=
\Bigl\{x\in X:\ \forall\varepsilon>0\ \exists\,y\in X,\ t>0
\ \text{with } \|y\|<\varepsilon
\ \text{and } \|S(t)y-x\|<\varepsilon\Bigr\}.
\]

\begin{lemma}[Invariance of dynamical sets under conformable time change]
\label{lem:invariance}
Let $\{S(t)\}$ and $\{U(s)\}$ be related by \eqref{eq:S-U}. Then
\[
X_0^\alpha=X_0,\qquad
X_\infty^\alpha=X_\infty,\qquad
X_p^\alpha=X_p,
\]
where $X_0$, $X_\infty$ and $X_p$ denote the corresponding sets associated with the classical
$C_0$--semigroup $\{U(s)\}_{s\ge0}$.
Consequently, $\{S(t)\}$ is $\alpha$--hypercyclic (resp.\ $\alpha$--chaotic) if and only if
$\{U(s)\}$ is hypercyclic (resp.\ chaotic).
\end{lemma}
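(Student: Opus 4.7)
The proof proposal is to exploit the single fact that $\varphi(t)=t^\alpha/\alpha$ is an increasing homeomorphism of $[0,\infty)$ onto itself, so that the two time flows $\{S(t)\}_{t\ge0}$ and $\{U(s)\}_{s\ge0}$ share the same orbits. The identity $S(t)=U(\varphi(t))$ combined with the surjectivity of $\varphi$ gives, for every $x\in X$,
\[
\{S(t)x:\,t\ge 0\}=\{U(\varphi(t))x:\,t\ge 0\}=\{U(s)x:\,s\ge 0\},
\]
and this equality of orbits will drive every claim.

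First I would record three elementary observations about $\varphi$: it is continuous, strictly increasing, with $\varphi(0)=0$ and $\varphi(t)\to\infty$ as $t\to\infty$, and its inverse $\varphi^{-1}(s)=(\alpha s)^{1/\alpha}$ enjoys the same properties. From this, the set-theoretic orbit equality above is immediate, which directly yields the equality $X_p^\alpha=X_p$: if $S(t)x=x$ for some $t>0$, then $U(\varphi(t))x=x$ with $\varphi(t)>0$, and conversely every $U$-periodicity at time $s>0$ is realized by $S$ at time $\varphi^{-1}(s)>0$. The same orbit equality proves the hypercyclicity equivalence: the orbit under $S$ through a point $x$ is dense in $X$ iff the orbit under $U$ through $x$ is.

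Next I would handle $X_0$ and $X_\infty$, which are asymptotic in nature. For $X_0$, since $\varphi$ and $\varphi^{-1}$ are continuous bijections mapping $\infty$ to $\infty$, we have $\lim_{t\to\infty}S(t)x=0$ iff $\lim_{t\to\infty}U(\varphi(t))x=0$ iff $\lim_{s\to\infty}U(s)x=0$, which gives $X_0^\alpha=X_0$. For $X_\infty$, the definition involves an existential quantifier over pairs $(y,t)$ with $y$ small and $\|S(t)y-x\|<\varepsilon$; by replacing $t$ with $s=\varphi(t)$ (a bijection on $(0,\infty)$) and using $S(t)=U(s)$, the same defining condition is obtained for $U$, so $X_\infty^\alpha=X_\infty$.

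Finally, the equivalence of $\alpha$-chaos and chaos for $\{U(s)\}$ follows by combining $\alpha$-hypercyclicity $\Leftrightarrow$ hypercyclicity with the density statement $X_p^\alpha=X_p$ (if one set is dense, so is the other). I do not expect any serious obstacle: the whole argument is essentially a bookkeeping exercise once one notes that $\varphi$ is an order-preserving homeomorphism of $[0,\infty)$ sending $0\mapsto 0$ and $\infty\mapsto\infty$. The only point requiring some care is the quantifier reshuffling in the definition of $X_\infty$, where one must verify that the substitution $s=\varphi(t)$ preserves both the "arbitrarily small norm" condition on $y$ (which does not depend on time) and the "approximates $x$ at some time" condition (which ranges over all positive times in both variables).
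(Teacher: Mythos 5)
Your proposal is correct and follows essentially the same route as the paper's proof: both rest on the fact that $\varphi(t)=t^\alpha/\alpha$ is an increasing homeomorphism of $[0,\infty)$ fixing $0$ and sending $\infty$ to $\infty$, so that the identity $S(t)=U(\varphi(t))$ makes orbits coincide and lets one transfer the limit, approximation, and periodicity conditions by the substitution $s=\varphi(t)$. Your write-up is in fact somewhat more explicit than the paper's (which leaves the verification of the three set equalities as a direct check), but the argument is the same.
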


\begin{proof}
The map $\varphi$ is a continuous strictly increasing bijection of $[0,\infty)$ onto itself,
with inverse $\varphi^{-1}(s)=(\alpha s)^{1/\alpha}$.
Using the identity $S(t)=U(\varphi(t))$, one directly checks that convergence, approximation,
and periodicity properties expressed in terms of $t$ are equivalent to those expressed in
terms of $s=\varphi(t)$.
In particular, the orbit sets coincide:
\[
\{S(t)x:\ t\ge0\}=\{U(s)x:\ s\ge0\},
\]
which yields the result.
\end{proof}

\subsection{Infinitesimal generators and the conformable clock}
\label{subsec:gen}

\begin{lemma}\label{lem:generator}
Let $A$ be the $\alpha$--generator of the $C_0$--$\alpha$--semigroup $\{S(t)\}$ and let $B$ be
the generator of the associated classical semigroup $\{U(s)\}$.
Then
\[
D(A)=D(B)
\qquad\text{and}\qquad
Ax=Bx \quad \text{for all } x\in D(A).
\]
\end{lemma}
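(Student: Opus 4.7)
The plan is to reduce this statement to the already-proved Lemma~\ref{lem:generators-coincide}, since the hypothesis $S(t)=U(\varphi(t))$ with $\varphi(t)=t^\alpha/\alpha$ is literally the relation $T_\alpha(t)=T(t^\alpha/\alpha)$ of Section~\ref{sec4} under a mere renaming of the semigroups. Because nothing in Lemma~\ref{lem:generators-coincide} depends on notation, the conclusion transfers verbatim. For completeness, however, I would reproduce the one-line core of the argument, since it is short and keeps Section~\ref{sec5} self-contained.

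First I would fix $x\in X$ and, for $t>0$, set $s=\varphi(t)=t^\alpha/\alpha$, so that $s\to 0^+$ if and only if $t\to 0^+$ by bijectivity and continuity of $\varphi$ (Step~1 of Theorem~\ref{thm:canonical-factorization}). The identity $S(t)=U(s)$ gives the key algebraic equality
\[
\frac{U(s)x-x}{s}=\frac{S(t)x-x}{t^\alpha/\alpha}.
\]
Taking $t\to 0^+$ (equivalently $s\to 0^+$), the existence of the limit on either side is equivalent to its existence on the other, and the two limits coincide. By the definition of the classical generator $B$ this limit is $Bx$ whenever $x\in D(B)$; by the characterization of the $\alpha$--generator $A$ as the conformable derivative of $S(\cdot)x$ at $t=0$, it is $Ax$ whenever $x\in D(A)$. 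Hence $D(A)=D(B)$ and the two operators agree pointwise on this common domain.

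The only point requiring care is the precise interpretation of the $\alpha$--generator, which in Section~\ref{sec2} is defined through $\lim_{t\to 0^+}\mathcal D_t^\alpha S(t)x$. I would dispatch this using Theorem~\ref{thm:diff-timechange}: applying it with $f(\cdot)=S(\cdot)x$ and $g(\cdot)=U(\cdot)x$ yields $\mathcal D_t^\alpha S(t)x=\frac{d}{ds}U(s)x\big|_{s=\varphi(t)}$ at every $t>0$ where either side exists, so the limit $t\to 0^+$ of $\mathcal D_t^\alpha S(t)x$ reduces to the right derivative of $U(\cdot)x$ at the origin, which is exactly the classical generator $B$ acting on $x$. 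This identification is the only substantive step; once it is made, the equality of domains and of actions follows immediately from the equivalence of the two difference quotients. I do not anticipate any serious obstacle beyond this notational reconciliation between the two equivalent forms of the conformable difference quotient.
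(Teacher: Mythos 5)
Your proof is correct and takes essentially the same route as the paper: the paper's own argument for this lemma is precisely the difference-quotient identity $\frac{U(s)x-x}{s}=\frac{S(t)x-x}{t^\alpha/\alpha}$ under $s=t^\alpha/\alpha$, with both limits existing simultaneously and coinciding (and the statement is indeed Lemma~\ref{lem:generators-coincide} in renamed notation). Your additional reconciliation, via Theorem~\ref{thm:diff-timechange}, of the Section~\ref{sec2} definition of the $\alpha$--generator with this difference quotient is a point the paper itself glosses over, so it is a welcome refinement rather than a deviation.
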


\begin{proof}
For $s>0$, set $t=(\alpha s)^{1/\alpha}$. By definition,
\[
\frac{U(s)x-x}{s}
=
\frac{S(t)x-x}{t^\alpha/\alpha}.
\]
Letting $s\to0^+$ (and hence $t\to0^+$), we see that both limits exist simultaneously and
coincide.
This proves the equality of domains and generators.
\end{proof}

\subsection{A Desch--Schappacher--Webb criterion in conformable time}
\label{subsec:DSW}

\begin{theorem}[Desch--Schappacher--Webb criterion for $C_0$--$\alpha$--semigroups]
\label{thm:DSW-alpha}
Let $X$ be separable and let $\{S(t)\}_{t\ge0}$ be a $C_0$--$\alpha$--semigroup on $X$ with
$\alpha$--generator $A$.
Assume that there exist:
\begin{itemize}
\item an open set $V\subset\sigma_p(A)$ such that $V\cap i\mathbb R\neq\emptyset$;
\item for each $\lambda\in V$, a nonzero vector $x_\lambda\in X$ satisfying
$Ax_\lambda=\lambda x_\lambda$;
\item for every $\phi\in X^\ast$, the map
$\lambda\mapsto\langle\phi,x_\lambda\rangle$ is analytic on $V$;
\item and this map is not identically zero unless $\phi=0$.
\end{itemize}
Then $\{S(t)\}_{t\ge0}$ is $\alpha$--chaotic, and in particular $\alpha$--hypercyclic.
\end{theorem}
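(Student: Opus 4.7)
The plan is to reduce the statement to the classical Desch–Schappacher–Webb theorem via the time-reparametrization machinery developed in Section~\ref{sec4}. First I would introduce the associated classical $C_0$--semigroup $\{U(s)\}_{s\ge0}$ defined by $U(s)=S((\alpha s)^{1/\alpha})$, whose existence and $C_0$ property were established in Lemma~\ref{lem:generators-coincide} and Proposition~\ref{prop:Td-reparam}. By Lemma~\ref{lem:generator}, the classical generator $B$ of $\{U(s)\}$ coincides with the $\alpha$--generator $A$ on a common domain, so in particular $\sigma_p(B)=\sigma_p(A)$, every eigenvector of $A$ is an eigenvector of $B$ for the same eigenvalue, and the functionals $\lambda\mapsto\langle\phi,x_\lambda\rangle$ do not change under this identification.

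Next I would observe that the four bullet-point hypotheses of Theorem~\ref{thm:DSW-alpha} are literally the hypotheses of the classical Desch–Schappacher–Webb criterion \cite{desch1997} applied to the generator $B$: an open set $V\subset\sigma_p(B)$ with $V\cap i\mathbb{R}\neq\emptyset$, a selection of eigenvectors $x_\lambda$, and separating analyticity of $\lambda\mapsto\langle\phi,x_\lambda\rangle$ on $V$ for every $\phi\in X^\ast$. Invoking the classical DSW theorem then yields that $\{U(s)\}_{s\ge0}$ is Devaney chaotic, i.e.\ hypercyclic with a dense set of periodic points.

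The final step is to transport this conclusion back to the conformable clock. Since $\varphi(t)=t^\alpha/\alpha$ is an increasing homeomorphism of $[0,\infty)$ onto itself, Lemma~\ref{lem:invariance} gives the equalities $X_0^\alpha=X_0$, $X_\infty^\alpha=X_\infty$, and $X_p^\alpha=X_p$, together with the coincidence of orbits $\{S(t)x:t\ge0\}=\{U(s)x:s\ge0\}$. Consequently, the hypercyclic vector furnished by DSW for $\{U(s)\}$ is an $\alpha$--hypercyclic vector for $\{S(t)\}$, and the classical periodic points of $\{U(s)\}$ are exactly the $\alpha$--periodic points of $\{S(t)\}$, with density preserved. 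This yields $\alpha$--chaos.

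The argument is essentially a transfer principle, and I do not foresee a genuine obstacle once Section~\ref{sec4} is in place. The one point requiring care, rather than difficulty, is the verification that the notion of $\alpha$--periodicity translates correctly through the clock change: a relation $S(t_0)x=x$ with $t_0>0$ is equivalent to $U(s_0)x=x$ with $s_0=t_0^\alpha/\alpha>0$, and conversely, so the sets $X_p^\alpha$ and $X_p$ indeed coincide as sets of vectors (not just as abstract statements about the existence of \emph{some} period). This is entirely built into Lemma~\ref{lem:invariance}, and no additional spectral or functional-analytic input is needed beyond the classical DSW theorem itself.
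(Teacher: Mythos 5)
Your proposal is correct and follows essentially the same route as the paper's own proof: pass to the classical semigroup $U(s)=S((\alpha s)^{1/\alpha})$, use Lemma~\ref{lem:generator} to identify its generator with $A$ so that the hypotheses become exactly those of the classical Desch--Schappacher--Webb theorem, and then transfer chaos back via Lemma~\ref{lem:invariance}. Your extra remark checking that $S(t_0)x=x$ corresponds to $U(s_0)x=x$ with $s_0=t_0^\alpha/\alpha$ is a welcome explicit verification of a point the paper leaves implicit in Lemma~\ref{lem:invariance}, but it does not change the argument.
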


\begin{proof}
Consider the classical semigroup $U(s)=S((\alpha s)^{1/\alpha})$.
By Lemma~\ref{lem:generator}, the generator of $\{U(s)\}$ coincides with $A$.
Hence the above assumptions are exactly those required by the classical
Desch--Schappacher--Webb criterion.
It follows that $\{U(s)\}$ is chaotic.
By Lemma~\ref{lem:invariance}, chaos is preserved under the conformable time
reparametrization, and therefore $\{S(t)\}$ is $\alpha$--chaotic.
\end{proof}

\begin{remark}
The identity $S(t)=U(t^\alpha/\alpha)$ shows that conformable semigroups do not generate new
orbit structures but rather reinterpret classical $C_0$--semigroups through a nonlinear
time scale.
As a consequence, orbit--based dynamical properties such as hypercyclicity, topological
transitivity, and density of periodic points are invariant under conformable time changes.
\end{remark}

\section{Translation semigroups conformable Lebesgue spaces}
\label{sec6}

Fix $\alpha\in(0,1]$. Throughout this section we work on the positive half-line
$\R_+=(0,\infty)$ equipped with the conformable measure
\[
d^\alpha x:=x^{\alpha-1}\,dx,
\]
and we use the conformable Lebesgue spaces introduced earlier:
\[
L^{p,\alpha}(\R_+)
=
\Bigl\{f:\R_+\to\C\ \text{measurable}:\ \int_0^\infty |f(x)|^p\,d^\alpha x<\infty\Bigr\},
\qquad 1\le p<\infty.
\]
In particular, we set
\[
X^\alpha:=L^{p,\alpha}(\R_+),
\qquad
\|f\|_{p,\alpha}^p=\int_0^\infty |f(x)|^p\,x^{\alpha-1}\,dx .
\]

The conformable derivative with respect to $x>0$ is denoted by $\partial_x^\alpha$ and is defined by
\[
\partial_x^\alpha u(x)
:=
\lim_{\varepsilon\to0}\frac{u\bigl(x+\varepsilon x^{1-\alpha}\bigr)-u(x)}{\varepsilon},
\]
whenever the limit exists. For $u\in C^1((0,\infty))$, the locality of the conformable
derivative yields the representation
\begin{equation}\label{eq:conf_repr_tr}
\partial_x^\alpha u(x)=x^{1-\alpha}u'(x),\qquad x>0.
\end{equation}

\subsection{Conformable spatial reparametrization and isometric transfer}


On $X^\alpha$ the naive shift $(S(t)f)(x)=f(x+t)$ is not naturally adapted to the conformable
geometry near $0$.
Instead, the conformable framework singles out the multiplicative shift
$x\mapsto (\;x^\alpha+t\;)^{1/\alpha}$, which is exactly the pullback of the usual translation
$\xi\mapsto \xi+t$ under $\xi=x^\alpha$.

\begin{definition}[$\alpha$--translation semigroup on $L^{p,\alpha}(\R_+)$]
\label{def:delta_translation}
For $t\ge0$ and $f\in X^\alpha$, define
\[
(\mathcal{T}_\alpha(t)f)(x)
:=
f\Bigl((x^\alpha+t)^{1/\alpha}\Bigr),
\qquad x>0.
\]
The family $\{\mathcal{T}_\alpha(t)\}_{t\ge0}$ is called the \emph{conformable translation semigroup}
on $L^{p,\alpha}(\R_+)$.
\end{definition}

\begin{proposition}[Semigroup and isometry]\label{prop:semigroup_isometry}
Let $1\le p<\infty$. Then $\{\mathcal{T}_\alpha(t)\}_{t\ge0}$ is a semigroup of linear operators on
$X^\alpha$, and for every $t\ge0$,
\[
\|\mathcal{T}_\alpha(t)f\|_{p,\alpha}=\|f\|_{p,\alpha},
\qquad f\in X^\alpha.
\]
In particular, $\{\mathcal{T}_\alpha(t)\}_{t\ge0}$ is a $C_0$--$\alpha$--semigroup in the sense of
Section~\ref{sec2}.
\end{proposition}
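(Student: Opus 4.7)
The plan is to reduce the entire statement to well-known properties of the classical left-translation semigroup $(V(t)g)(\xi)=g(\xi+t)$ on $L^p(\R_+)$, by conjugating with the isometric isomorphism $U_p:L^{p,\alpha}(\R_+)\to L^p(\R_+)$ established in the preceding subsection. The guiding observation is that the spatial map $x\mapsto(x^\alpha+t)^{1/\alpha}$ defining $\mathcal{T}_\alpha(t)$ is precisely the pullback of the ordinary translation $\xi\mapsto\xi+t$ under the change of variables $\xi=x^\alpha$, so everything should become classical once one rewrites the problem in the $\xi$-coordinate.

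The first step is to verify the intertwining identity $U_p\mathcal{T}_\alpha(t)U_p^{-1}=V(t)$. This is a short computation: starting from $(U_p^{-1}g)(x)=\alpha^{1/p}g(x^\alpha)$ and applying $\mathcal{T}_\alpha(t)$, the inner argument becomes $((x^\alpha+t)^{1/\alpha})^\alpha=x^\alpha+t$, so $\mathcal{T}_\alpha(t)U_p^{-1}g$ equals $\alpha^{1/p}g(x^\alpha+t)$; applying $U_p$ then cancels the $\alpha^{1/p}$ prefactor and substitutes $\xi=x^\alpha$, yielding $g(\xi+t)$. This single identity is the backbone of the whole proof.

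With the intertwining in hand, each claim becomes routine. The semigroup property $\mathcal{T}_\alpha(s)\mathcal{T}_\alpha(t)=\mathcal{T}_\alpha(s+t)$ follows either as a conjugate of $V(s)V(t)=V(s+t)$ or from a one-line direct substitution giving $(\mathcal{T}_\alpha(s)\mathcal{T}_\alpha(t)f)(x)=f((x^\alpha+s+t)^{1/\alpha})$. The isometry $\|\mathcal{T}_\alpha(t)f\|_{p,\alpha}=\|f\|_{p,\alpha}$ is immediate because $U_p$ is an isometric isomorphism and $V(t)$ preserves the $L^p$-norm. Strong continuity at $t=0$ reduces through $U_p$ to the well-known strong continuity of translations on $L^p(\R_+)$, which is handled by the standard density argument using continuous compactly supported functions.

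The point that needs most care, and the main conceptual obstacle I anticipate, is reconciling the additive law $\mathcal{T}_\alpha(s+t)=\mathcal{T}_\alpha(s)\mathcal{T}_\alpha(t)$ derived above with the formal $\alpha$-semigroup identity $S((r+q)^{1/\alpha})=S(r^{1/\alpha})S(q^{1/\alpha})$ recalled in Section~\ref{sec2}. I would resolve this by interpreting $\{\mathcal{T}_\alpha(t)\}$ as the underlying classical $C_0$-semigroup $\{T(s)\}$ of Proposition~\ref{prop:Td-reparam} and then passing to the $\alpha$-version through the canonical clock $s=\psi(t)=t^\alpha/\alpha$; once this identification is made explicit, the $C_0$-$\alpha$-semigroup conclusion on $X^\alpha$ follows from the time-reparametrization framework of Section~\ref{sec4} together with the isometry and strong continuity already obtained.
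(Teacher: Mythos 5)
Your conjugacy identity $U_p\mathcal{T}_\alpha(t)U_p^{-1}=V(t)$ is correct and is exactly the mechanism the paper uses (its Remark on conjugacy with the classical translation semigroup), and your treatment of the semigroup law and of strong continuity via this conjugacy matches the paper's proof. The genuine gap is in the isometry step: your justification rests on the claim that the left translation $(V(t)g)(\xi)=g(\xi+t)$ preserves the $L^p(0,\infty)$-norm, and this is false. One has $\|V(t)g\|_p^p=\int_0^\infty|g(\xi+t)|^p\,d\xi=\int_t^\infty|g(\eta)|^p\,d\eta\le\|g\|_p^p$, with strict inequality whenever $g$ is nonzero on a set of positive measure in $(0,t)$; the left translation semigroup is a contraction (indeed $V(t)g\to0$ for each $g$), not a group of isometries. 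Transported back through $U_p$, equality $\|\mathcal{T}_\alpha(t)f\|_{p,\alpha}=\|f\|_{p,\alpha}$ holds only for those $f$ vanishing a.e.\ on $(0,t^{1/\alpha})$. So the one step of the proposition that actually needs an argument is not repaired by your reduction; it is worth noting that the paper's own proof stumbles at the same point (it derives only the inequality and then asserts equality through an unclear remark about the missing interval $(0,t)$ "contributing $0$"), and the isometry claim would require either a two-sided setting, a right translation extended by zero (i.e.\ $x\mapsto(x^\alpha-t)^{1/\alpha}$ with zero fill-in), or a restriction of the class of $f$.

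A secondary, smaller issue: you correctly sense the tension between the additive law $\mathcal{T}_\alpha(s)\mathcal{T}_\alpha(t)=\mathcal{T}_\alpha(s+t)$ and the Section~\ref{sec2} identity $S\bigl((r+q)^{1/\alpha}\bigr)=S\bigl(r^{1/\alpha}\bigr)S\bigl(q^{1/\alpha}\bigr)$, but your resolution (reading $\{\mathcal{T}_\alpha(t)\}$ as the classical family $\{T(s)\}$ and then reparametrizing by $\psi$) produces a \emph{different} family satisfying the $\alpha$-law, rather than verifying the definition for $\{\mathcal{T}_\alpha(t)\}$ itself; as defined, $\mathcal{T}_\alpha(r^{1/\alpha})\mathcal{T}_\alpha(q^{1/\alpha})=\mathcal{T}_\alpha(r^{1/\alpha}+q^{1/\alpha})\neq\mathcal{T}_\alpha\bigl((r+q)^{1/\alpha}\bigr)$ in general. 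The paper simply asserts the "$C_0$--$\alpha$-semigroup" conclusion without addressing this, so here your proposal is at least more self-aware, but it should state explicitly that the conclusion holds for the reparametrized family, not verbatim for $\mathcal{T}_\alpha$ with its additive parameterization.
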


\begin{proof}
\textbf{Semigroup law.}
For $s,t\ge0$,
\[
(\mathcal{T}_\alpha(s)\mathcal{T}_\alpha(t)f)(x)
=
(\mathcal{T}_\alpha(t)f)\Bigl((x^\alpha+s)^{1/\alpha}\Bigr)
=
f\Bigl(\bigl((x^\alpha+s)+t\bigr)^{1/\alpha}\Bigr)
=
(\mathcal{T}_\alpha(s+t)f)(x),
\]
and $\mathcal{T}_\alpha(0)=I$.

\medskip
\textbf{Isometry.}
Using the change of variables $\xi=x^\alpha$ (hence $x^{\alpha-1}dx=\alpha^{-1}d\xi$),
\[
\|\mathcal{T}_\alpha(t)f\|_{p,\alpha}^p
=
\int_0^\infty \left|f\bigl((x^\alpha+t)^{1/\alpha}\bigr)\right|^p x^{\alpha-1}dx
=
\frac1\alpha\int_0^\infty |f((\xi+t)^{1/\alpha})|^p d\xi.
\]
Now set $\eta=\xi+t$, so $d\eta=d\xi$ and the integral becomes
\[
\frac1\alpha\int_t^\infty |f(\eta^{1/\alpha})|^p d\eta
\le \frac1\alpha\int_0^\infty |f(\eta^{1/\alpha})|^p d\eta
=
\|f\|_{p,\alpha}^p.
\]
Moreover, equality holds because $\eta\mapsto \eta-t$ is a bijection of $(t,\infty)$ onto $(0,\infty)$,
and the missing interval $(0,t)$ contributes $0$ once one interprets translations on $\R_+$ in the
standard way on $L^p(0,\infty)$ through $U_p$ (equivalently, $U_p\mathcal{T}_\alpha(t)U_p^{-1}$ is the
classical translation on $L^p(0,\infty)$). Hence $\|\mathcal{T}_\alpha(t)f\|_{p,\alpha}=\|f\|_{p,\alpha}$.

\medskip
\textbf{Strong continuity.}
Since $U_p\mathcal{T}_\alpha(t)U_p^{-1}$ is the classical translation semigroup on $L^p(0,\infty)$,
strong continuity follows from the classical case and the continuity of $U_p^{\pm1}$.
\end{proof}

\begin{remark}[Conjugacy with the classical translation semigroup]\label{rem:conjugacy_translation}
Let $\{\tau(t)\}_{t\ge0}$ be the classical translation semigroup on $L^p(0,\infty)$:
\[
(\tau(t)g)(\xi)=g(\xi+t).
\]
Then, by construction,
\[
\tau(t)=U_p\,\mathcal{T}_\alpha(t)\,U_p^{-1},
\qquad t\ge0.
\]
Thus $\mathcal{T}_\alpha(t)$ is exactly the pullback of classical translations under the conformable
reparametrization $\xi=x^\alpha$.
\end{remark}

\subsection{Generator of the conformable translation semigroup}

\begin{proposition}[Generator as a conformable derivative]\label{prop:generator_conf_translation}
Let $1\le p<\infty$. The infinitesimal generator $\mathcal{G}_\alpha$ of
$\{\mathcal{T}_\alpha(t)\}_{t\ge0}$ on $X^\alpha$ is given by
\[
D(\mathcal{G}_\alpha)
=
\bigl\{f\in X^\alpha:\ \partial_x^\alpha f\in X^\alpha\bigr\},
\qquad
\mathcal{G}_\alpha f=\partial_x^\alpha f,
\]
where $\partial_x^\alpha f$ is understood in the distributional sense and coincides with
$x^{1-\alpha}f'(x)$ for smooth $f$.
\end{proposition}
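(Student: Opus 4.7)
The plan is to exploit the isometric conjugacy established in Remark~\ref{rem:conjugacy_translation}, namely $\tau(t)=U_p\,\mathcal{T}_\alpha(t)\,U_p^{-1}$, which reduces the identification of the generator to a standard computation on $L^p(0,\infty)$. The classical right--translation semigroup $(\tau(t)g)(\xi)=g(\xi+t)$ is a $C_0$--semigroup on $L^p(0,\infty)$ whose infinitesimal generator is the weak derivative $Gg=g'$ with domain $D(G)=W^{1,p}(0,\infty)$; this is a textbook fact. First I would invoke the standard similarity principle for $C_0$--semigroups: if $V\colon X\to Y$ is a bounded linear isomorphism and $\{S(t)\}_{t\ge0}$ is a $C_0$--semigroup on $Y$ with generator $B$, then $\{V^{-1}S(t)V\}_{t\ge0}$ is a $C_0$--semigroup on $X$ with generator $V^{-1}BV$ and domain $\{x\in X:\ Vx\in D(B)\}$. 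Applying this with $V=U_p$ yields at once
\[
\mathcal{G}_\alpha=U_p^{-1}\,G\,U_p,\qquad D(\mathcal{G}_\alpha)=\{f\in X^\alpha:\ U_pf\in W^{1,p}(0,\infty)\}.
\]

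Next I would compute $U_p^{-1}GU_p$ pointwise and recognise it as a conformable derivative. For smooth compactly supported $f$, the composition $(U_pf)(\xi)=\alpha^{-1/p}f(\xi^{1/\alpha})$ is classically differentiable and the chain rule together with the substitution $\xi=x^\alpha$ identifies $(U_p^{-1}GU_pf)(x)$ with a multiple of $x^{1-\alpha}f'(x)$, i.e.\ with $\partial_x^\alpha f(x)$ through the representation \eqref{eq:conf_repr_tr}. For a general $f$ in the candidate domain, the same computation is carried out on the absolutely continuous representative of $U_pf$ and then transported back through $U_p^{-1}$.

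The third step is the domain equivalence
\[
U_pf\in W^{1,p}(0,\infty)\ \Longleftrightarrow\ \partial_x^\alpha f\in L^{p,\alpha}(\R_+),
\]
where $\partial_x^\alpha f$ is understood distributionally in the sense that there exists $h\in L^{p,\alpha}(\R_+)$ satisfying the appropriate weighted integration--by--parts identity against smooth test functions compactly supported in $(0,\infty)$. Here I would use the isometric identification of $L^{p,\alpha}(\R_+)$ with $L^p(0,\infty)$, together with the fact that $\xi=x^\alpha$ is a $C^1$--diffeomorphism on $(0,\infty)$ with a locally integrable Jacobian, to translate Sobolev regularity in $\xi$ into weighted weak differentiability in $x$, and vice versa.

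The hard part will be the careful handling of the boundary behaviour at $x=0$, where the weight $x^{\alpha-1}$ is singular and the change of variables $\xi=x^\alpha$ degenerates. Specifically, I need to verify that weak differentiability in $\xi$ on $(0,\infty)$ transfers cleanly to the conformable weak derivative in $x$ without spurious distributional contributions at the origin. The cleanest route is a density argument using $C_c^\infty(0,\infty)$, whose image under $U_p^{-1}$ is dense in the candidate domain, combined with a closability check for the operator $f\mapsto \partial_x^\alpha f$ in $L^{p,\alpha}(\R_+)$. Once this technicality is settled, the identification $\mathcal{G}_\alpha f=\partial_x^\alpha f$ on the stated domain follows immediately from the similarity principle and the pointwise computation above.
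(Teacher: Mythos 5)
Your route is the same as the paper's: conjugate the classical translation semigroup $(\tau(t)g)(\xi)=g(\xi+t)$ on $L^p(0,\infty)$ back through $U_p$, use the textbook fact that its generator is $d/d\xi$ on $W^{1,p}(0,\infty)$, and invoke the similarity principle for generators. In fact your write-up supplies more than the paper does, since the paper's own proof delegates the key conjugation step to a lemma that never appears in the text; the similarity principle plus the change-of-variables identification of weak derivatives is exactly the missing content, and your observation that $\xi=x^\alpha$ is a smooth diffeomorphism of $(0,\infty)$ onto itself mapping $C_c^\infty(0,\infty)$ onto itself disposes of the boundary worry at $x=0$ (no trace condition enters for this semigroup, and no spurious distributional terms can arise).

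One point you should not gloss over: carried out exactly, the chain rule gives, for smooth $f$,
\[
\bigl(U_p^{-1}GU_pf\bigr)(x)
=\alpha^{1/p}\,\frac{d}{d\xi}\Bigl[\alpha^{-1/p}f(\xi^{1/\alpha})\Bigr]\Big|_{\xi=x^\alpha}
=\frac{1}{\alpha}\,x^{1-\alpha}f'(x),
\]
and the same factor appears if one differentiates $(\mathcal{T}_\alpha(t)f)(x)=f\bigl((x^\alpha+t)^{1/\alpha}\bigr)$ in $t$ at $t=0$. So the ``multiple of $x^{1-\alpha}f'(x)$'' in your second step is $1/\alpha$, and it cannot be silently identified with $\partial_x^\alpha f$ via \eqref{eq:conf_repr_tr} unless $\alpha=1$ or one renormalizes — e.g.\ builds $U_p$ and the shift on the conformable clock $\xi=x^\alpha/\alpha$ (equivalently, uses the shift $x\mapsto(x^\alpha+\alpha t)^{1/\alpha}$), in which case the constant disappears. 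This normalization mismatch is already present in the paper's statement and proof, but your phrasing inherits it; a careful version of your argument should either carry the constant explicitly (yielding $\mathcal{G}_\alpha=\tfrac1\alpha\partial_x^\alpha$ on the stated domain, which is unaffected since multiplying by a nonzero constant does not change the domain) or fix the normalization so that the asserted identity $\mathcal{G}_\alpha f=\partial_x^\alpha f$ is literally true. A minor terminological slip: $(\tau(t)g)(\xi)=g(\xi+t)$ is the left translation semigroup; your generator claim for it is nevertheless the correct one.
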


\begin{proof}
By proposition~\ref{prop:isometry} and Remark~\ref{rem:conjugacy_translation},
the generator of $\{\mathcal{T}_\alpha(t)\}$ is obtained by transporting the generator of
$\{\tau(t)\}$ under $U_p$. Since the classical generator is $\frac{d}{d\xi}$ with domain
$W^{1,p}(0,\infty)$ (together with the usual trace condition at $0$ when needed),
Lemma~\ref{lem:conf_to_classical} shows that conjugation yields $\partial_x^\alpha$ on $X^\alpha$.
\end{proof}

\subsection{Hypercyclicity in conformable $L^{p,\alpha}$ spaces}

Pure translations on $L^p(0,\infty)$ are not hypercyclic. As in the classical literature,
hypercyclic behavior appears after introducing an additional weight/amplification along the orbit.
In the conformable setting, the natural way is to consider a multiplicative cocycle
compatible with the map $x\mapsto(x^\alpha+t)^{1/\alpha}$.

\begin{definition}[Weighted conformable translation semigroup]\label{def:weighted_conf_translation}
Let $\gamma:\R_+\to(0,\infty)$ be measurable. Define
\[
(\mathcal{T}_{\alpha,\gamma}(t)f)(x)
:=
\gamma(t)\,f\Bigl((x^\alpha+t)^{1/\alpha}\Bigr),
\qquad t\ge0,\ x>0.
\]
Then $\{\mathcal{T}_{\alpha,\gamma}(t)\}_{t\ge0}$ is a semigroup whenever
$\gamma(t+s)=\gamma(t)\gamma(s)$ for all $s,t\ge0$ (hence $\gamma(t)=e^{\kappa t}$ for some $\kappa\in\R$).
\end{definition}

\begin{remark}
Under the isometric map $U_p$, one has
\[
U_p\,\mathcal{T}_{\alpha,\gamma}(t)\,U_p^{-1}
=
\gamma(t)\,\tau(t),
\]
i.e.\ the classical weighted translation semigroup on $L^p(0,\infty)$.
Therefore, dynamical properties expressed in terms of orbits (hypercyclicity, chaos, density of
periodic points) can be transferred between the classical and conformable pictures exactly as in
Section~\ref{sec5}.
\end{remark}

\begin{proposition}[Orbit invariance under conformable transfer]\label{prop:orbit_invariance_translation}
Let $t\mapsto \gamma(t)$ satisfy $\gamma(t+s)=\gamma(t)\gamma(s)$ and let $f\in X^\alpha$.
Then the orbit $\{\mathcal{T}_{\alpha,\gamma}(t)f:\ t\ge0\}$ is dense in $X^\alpha$
if and only if the orbit $\{\gamma(t)\tau(t)g:\ t\ge0\}$ is dense in $L^p(0,\infty)$,
where $g=U_pf$.
\end{proposition}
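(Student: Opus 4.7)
The statement is a direct consequence of the isometric transfer principle: since $U_p$ is an isometric isomorphism (Proposition on $L^{p,\alpha}$–$L^p$ isometry) and it intertwines the two semigroups (Remark following Definition~\ref{def:weighted_conf_translation}), orbits map to orbits and density is preserved. My plan is to make these two assertions explicit and then invoke the elementary fact that an isometric bijection between Banach spaces sends dense sets to dense sets in both directions.

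First, I would verify the intertwining identity at the level of the individual orbit. For every $t\ge 0$ and $f\in X^\alpha$, applying $U_p$ to $\mathcal{T}_{\alpha,\gamma}(t)f$ and using the conjugacy relation
\[
U_p\,\mathcal{T}_{\alpha,\gamma}(t)\,U_p^{-1}=\gamma(t)\,\tau(t),
\]
already noted in the excerpt (and verified directly from the definitions of $U_p$, $\mathcal{T}_{\alpha,\gamma}(t)$, and the change of variables $\xi=x^\alpha$), I obtain
\[
U_p\bigl(\mathcal{T}_{\alpha,\gamma}(t)f\bigr)=\gamma(t)\,\tau(t)\,U_pf=\gamma(t)\,\tau(t)\,g.
\]
Hence the bijection $U_p$ maps the set $\mathcal{O}_\alpha(f):=\{\mathcal{T}_{\alpha,\gamma}(t)f:t\ge 0\}\subset X^\alpha$ onto the set $\mathcal{O}(g):=\{\gamma(t)\tau(t)g:t\ge 0\}\subset L^p(0,\infty)$.

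Next, I would conclude using the isometric property. Since $U_p$ is an isometric isomorphism, both $U_p$ and $U_p^{-1}$ are continuous and surjective. Thus $\overline{U_p(A)}=U_p(\overline{A})$ for any subset $A\subset X^\alpha$, and in particular
\[
\overline{\mathcal{O}(g)}=\overline{U_p(\mathcal{O}_\alpha(f))}=U_p\bigl(\overline{\mathcal{O}_\alpha(f)}\bigr).
\]
Consequently, $\overline{\mathcal{O}_\alpha(f)}=X^\alpha$ if and only if $U_p(\overline{\mathcal{O}_\alpha(f)})=U_p(X^\alpha)=L^p(0,\infty)$, i.e.\ if and only if $\overline{\mathcal{O}(g)}=L^p(0,\infty)$. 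This is exactly the claimed equivalence.

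There is no substantial obstacle in this proof; everything reduces to the verified intertwining relation and the trivial topological fact that isometric bijections preserve density. The only point one should state carefully is that the correspondence $f\leftrightarrow g=U_pf$ is itself a bijection between $X^\alpha$ and $L^p(0,\infty)$, so the equivalence is genuinely symmetric. If desired, this short argument can be packaged more abstractly as an instance of the general principle that topological transitivity and, more broadly, orbit-based dynamical properties are invariants under topological conjugacy of semigroups, of which the present conformable/classical pair is a concrete instance.
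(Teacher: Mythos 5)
Your proof is correct and follows essentially the same route as the paper: apply the conjugacy relation $U_p\,\mathcal{T}_{\alpha,\gamma}(t)=\gamma(t)\,\tau(t)\,U_p$ to identify the two orbits, then use that $U_p$ is an isometric isomorphism (hence a homeomorphism) to transfer density in both directions. You merely spell out the details the paper leaves implicit.
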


\begin{proof}
This is immediate from the conjugacy relation
$U_p\,\mathcal{T}_{\alpha,\gamma}(t)=\gamma(t)\tau(t)\,U_p$ and the fact that $U_p$ is a
homeomorphism.
\end{proof}

\begin{remark}[How to obtain hypercyclic examples]
Once a hypercyclicity criterion for weighted translations on $L^p(0,\infty)$ is fixed (for instance, via admissible weights in the classical sense), Proposition~\ref{prop:orbit_invariance_translation} produces hypercyclic and chaotic examples in the conformable space $L^{p,\alpha}(\R_+)$ by the simple transport $f=U_p^{-1}g$ and the conformable shift $x\mapsto(x^\alpha+t)^{1/\alpha}$. In this way, the conformable dynamics can be studied within the same orbit-theoretic framework while keeping the natural conformable measure $d^\alpha x$.
\end{remark}

\section{Conclusion}\label{sec7}

This paper develops an operator--theoretic viewpoint on conformable evolution that is entirely driven by a nonlinear change of time. The starting point is the conformable clock
\[
\psi(t)=\frac{t^\alpha}{\alpha},
\]
which allows one to interpret a $C_0$--$\alpha$--semigroup as a classical $C_0$--semigroup read along the time scale $s=\psi(t)$. Within this framework we proved that every $C_0$--$\alpha$--semigroup $\{T_\alpha(t)\}_{t\ge0}$ admits the representation
\[
T_\alpha(t)=T\!\left(\frac{t^\alpha}{\alpha}\right),
\]
for a uniquely associated classical semigroup $\{T(s)\}_{s\ge0}$, and that the corresponding infinitesimal generators coincide on a common domain. As a consequence, conformable and classical Cauchy problems are equivalent at the level of mild solutions via the explicit reparametrization $t\leftrightarrow s$.

A second contribution concerns the natural functional setting for such problems. We worked in the conformable Lebesgue spaces $L^{p,\alpha}$ induced by the measure $d^\alpha x=x^{\alpha-1}dx$, and we established explicit isometric (and in the Hilbert case, unitary) identifications with standard $L^p$ spaces through the change of variables $\xi=x^\alpha$. This transfer principle makes it possible to import, without distortion, classical estimates and spectral information into the conformable context.

The main dynamical outcome is that orbit--based properties are invariant under conformable time change. In particular, $\alpha$--hypercyclicity and $\alpha$--chaos for $C_0$--$\alpha$--semigroups coincide with the usual notions for the associated classical semigroup. This invariance yields a conformable version of the Desch- Schappacher--Webb criterion: whenever the point spectrum of the $\alpha$--generator satisfies the standard analytic eigenvector assumptions and intersects the imaginary axis, the conformable semigroup is $\alpha$--chaotic. We also illustrated how the same philosophy applies to translation models in conformable Lebesgue spaces, where the conformable shift $x\mapsto(x^\alpha+t)^{1/\alpha}$ appears naturally as the pullback of classical translations.

Overall, the results clarify the structural meaning of conformable dynamics: the fractional parameter does not create new orbit geometry by itself, but rather reshapes classical evolution through a nonlinear clock and a compatible change of measure. This perspective provides a simple and robust pathway to analyze conformable evolution equations and to construct chaotic or hypercyclic examples by transporting known classical ones. Possible continuations include non-autonomous conformable problems, perturbation and control questions, and a systematic comparison between conformable models and genuinely nonlocal fractional evolutions where memory effects are present.

\end{document}